\documentclass[a4paper]{article}

\usepackage[english]{babel}
\usepackage[utf8]{inputenc}
\usepackage{amsmath,amsthm}
\usepackage{graphicx}
\usepackage[colorinlistoftodos]{todonotes}
\usepackage{enumitem}
\usepackage{amsfonts}
\usepackage{verbatim}
\usepackage{listings}
\usepackage[margin=1.0in]{geometry}
\usepackage{hyperref}
\usepackage{epstopdf}
\usepackage{comment}
\usepackage{color}
\epstopdfsetup{update}

\newtheorem{thm}{Theorem}
\newtheorem{lem}{Lemma}
\newtheorem{prop}{Proposition}
\newtheorem{cor}{Corollary}
\newtheorem{exm}{Example}
\newtheorem{rem}{Remark}
\newtheorem{conj}{Conjecture}
\newtheorem{defn}{Definition}

\newcommand{\ssp}{{\mathcal{C}}}

\newcommand{\sspa}{{\mathcal{C}}_{\alpha}}

\title{Positivity for convective semi-discretizations}

\author{Imre Fekete, David I. Ketcheson, Lajos L\'oczi\footnote{Authors listed alphabetically.  
Emails: {\texttt{\{imre.fekete, david.ketcheson, lajos.loczi\}@kaust.edu.sa}}.
This work was supported by the King Abdullah University of Science and Technology (KAUST), 4700 Thuwal, 23955-6900, Saudi Arabia. The first author was also supported by the Tempus Public Foundation. The third author was also supported by the 
Department of Numerical Analysis, E\"otv\"os Lor\'and University, and the 
Department of Differential Equations, Budapest University of Technology and Economics, Hungary.}}

\date{\today}

\begin{document}
\maketitle

\begin{abstract}
We propose a technique for investigating stability properties like positivity
and forward invariance of an interval for method-of-lines discretizations, and
apply the technique to study positivity preservation for a class of TVD
semi-discretizations of 1D scalar hyperbolic conservation laws.
This technique is a generalization of the approach
suggested in \cite{K10}.  We give more relaxed conditions on the time-step for positivity
preservation  for slope-limited semi-discretizations
integrated in time with explicit Runge--Kutta methods.
We show that the step-size restrictions derived are sharp in a certain
sense, and that many higher-order explicit Runge--Kutta methods,
including the classical 4th-order method and all non-confluent methods with a
negative Butcher coefficient, cannot generally maintain positivity for these
semi-discretizations under any positive step size.
We also apply the proposed technique to centered finite difference
discretizations of scalar hyperbolic and parabolic problems.
\end{abstract}

\section{Introduction}

A number of important PDE models have the property that they preserve
positivity of the initial data:
\begin{align} \label{PDE-positivity}
    U(x,t_0) & \ge 0 \implies U(x,t)\ge 0 & \forall \ \ t\ge t_0,
\end{align}
or (more strongly) that they preserve the interval containing the initial data:
\begin{align} \label{PDE-invariance}
    U_\text{min} \le U(x,t_0) & \le U_\text{max} \implies U_\text{min} \le U(x,t) \le U_\text{max} & \forall \ \ t\ge t_0.
\end{align}
Examples of such PDEs include scalar hyperbolic conservation laws in one
spatial dimension, as well as the heat equation and some of its
generalizations.
In this work we study a technique that was first used in \cite{K10, K15}
for determining whether a certain
class of numerical discretizations satisfies the discrete analog
of \eqref{PDE-positivity} or \eqref{PDE-invariance}.

We focus on the application of this technique to the initial-boundary-value 
problem given by the hyperbolic conservation law \eqref{eq:conslaw} below,
together with positive initial and boundary data.
A common approach to solving hyperbolic conservation laws numerically
is to discretize in space with a slope  or flux limiter, and in
time with an explicit Runge--Kutta (explicit RK, or ERK) method.  It is natural
to ask whether the positivity property is retained under this discretization.
This question is usually analyzed by using Harten's theorem
\cite[Lemma~2.2]{harten1983a} to
show positivity under explicit Euler integration, and then
applying a higher-order strong stability preserving (SSP) 
method in time \cite{David11}.  This can be thought of as a method-of-lines
positivity analysis, in which the spatial and temporal discretizations
are analyzed separately.  In the present work, we perform a direct
positivity analysis of fully discretized schemes, obtaining stronger
results than what can be achieved by considering only Harten's theorem and
SSP methods.  These results provide a theoretical basis for some empirical
observations in \cite{hundsdorfer1995positive,2005_ketcheson_robinson},
wherein various Runge--Kutta integrators preserved strong stability
properties under step sizes much larger than those suggested by the
existing theory.

Following the usual terminology, the term {\em positivity} in the context of
positivity preservation is always meant herein in the weak sense; i.e., it
means {\em non-negativity}.  Although we focus on positivity to simplify the
presentation, the conditions we derive are necessary and sufficient for forward
invariance of an interval; see Theorem \ref{thm:invariance}.

Our paper is organized as follows.  In Section \ref{sec:TVD}, we review a
widely-used class of total-variation-diminishing (TVD) semi-discretizations for
hyperbolic conservation laws \eqref{eq:conslaw}, resulting in a system of ODEs with a specific structure. 
In Section \ref{sec:necsuff}, we introduce the concept of (positivity) step-size coefficient, and present our main theoretical results, showing sufficient or necessary
conditions for positivity preservation for this system of ODEs when integrated with an explicit
Runge--Kutta method. In the rest of the paper we investigate the step-size coefficients in some classes of ERK methods.
In Section \ref{sect:ERK(2,2)}, we summarize the results of \cite{K10} on the 
optimal SSP coefficient and positivity
step-size coefficient for two-stage, second-order ERK methods.
In Section \ref{sect:ERK(3,3)}, we present our main computational results for three-stage, third-order ERK methods.
We determine the optimal step-size coefficient in each of the three subclasses: methods with optimal step-size coefficient are essentially characterized in the two-parameter subclass,
whereas optimal methods in the two, one-parameter subclasses are completely described.
We also discover that the unique three-stage, third-order ERK method having the minimum
truncation-error coefficient also belongs to the set of methods with maximum
step-size coefficient.
In the proofs we are to find certain maximal hypercubes over which some multivariable polynomials are 
simultaneously non-negative. Section \ref{sect:ERKhigher} extends our analysis to higher-order ERK methods with more stages, and shows many negative results. 
Throughout Sections \ref{sect:ERK(2,2)}-\ref{sect:ERKhigher}, the ERK methods have been applied to the upwind semi-discretization of the advection equation. In Section \ref{sec:discussion}, we illustrate  the applicability of the proposed ideas to the centered spatial discretization of the advection equation and
to the heat equation. 
Finally, in Appendix \ref{sec:appendix} we present some \textit{Mathematica} code to generate the 
multivariable polynomials corresponding to an ERK method, and test their non-negativity in hypercubes. 

\subsection{TVD semi-discretizations under Runge--Kutta integration}\label{sec:TVD}

Semi-discretizations with a slope limiter can be analyzed as follows
\cite[Chapter III, Section 1.1-1.3]{Hundsdorfer03}---below $U$ will denote the PDE solution, while 
the corresponding lower-case letter is used in semi-discretized ODEs.
For concreteness we first consider the advection equation
\begin{align} \label{eq:advection}
    U_t + a(t) U_x & = 0
\end{align}
with $a>0$; the following analysis can be extended to arbitrary variations in
$a$.  We discretize in space by
\begin{align} \label{eq:fluxdiff}
  u'_k(t)=\frac{1}{\Delta x}\left(f_{k-\frac{1}{2}}-f_{k+\frac{1}{2}}\right),
\end{align}
where $\Delta x$ is the spatial mesh width, and the fluxes are given by
\begin{align*}
    f_{k+\frac{1}{2}} := a(t)\left(u_k+\psi(\theta_k)\left(u_{k+1}-u_k\right)\right).
\end{align*}
Here $\psi$ is known as the limiter function and $\theta_k$ is a ratio of
divided differences. This discretization leads to the form
\begin{align} \label{eq:advect-semi}
u_k'(t)=\frac{a(t)}{\Delta x}\left(1-\psi(\theta_{k-1})+\frac{1}{\theta_k}\psi(\theta_k)\right)(u_{k-1}(t)-u_k(t)).
\end{align}
More generally, we consider the scalar nonlinear conservation law
\begin{align} \label{eq:conslaw}
    U_t + f(U)_x & = 0,
\end{align}
where for simplicity we assume $f' \ge 0$.  The flux-differencing
semi-discretization \eqref{eq:fluxdiff} is again used, with the flux now
given by $f_{k+\frac{1}{2}}:=f(u_{k+\frac{1}{2}})$ where
\begin{equation}\label{eq:nonlincell}
u_{k+\frac{1}{2}}:= u_k+\psi(\theta_k)(u_{k+1}-u_k).
\end{equation}
By using the mean-value theorem, this can be written as
$$
u_k'(t)=\frac{f'(\eta_k)}{\Delta x}\left(1-\psi(\theta_{k-1})+\frac{1}{\theta_k}\psi(\theta_k)\right)(u_{k-1}(t)-u_{k}(t)),
$$
for some $\eta_k\in [u_{k-\frac{1}{2}},u_{k+\frac{1}{2}}]$.

It can be proved that non-negativity of \eqref{eq:fluxdiff} is ensured in either case if
\begin{align}\label{eq:Hundspositivity}
1-\psi(\theta)+\frac{1}{\tilde{\theta}}\psi(\tilde{\theta})\geq 0
\end{align}
for all $\theta\in\mathbb{R}$ and $\tilde{\theta}\in\mathbb{R}\setminus\{0\}$. 
A sufficient condition for \eqref{eq:Hundspositivity} is that 
\begin{align}\label{eq:Hundspositivity2}
0\leq \psi(\theta)\leq 1 \ \ (\theta\in\mathbb{R}),\quad \mathrm{and}\quad 0\leq\frac{1}{\theta}\psi(\theta)\leq\mu
\ \ (\theta\in\mathbb{R}\setminus\{0\})
\end{align}
for some value of $\mu$.  The upper bound $\mu$ will be important below when we
discretize in time.  Some well-known limiters 
satisfying conditions \eqref{eq:Hundspositivity2} include
\begin{itemize}
\item the minmod limiter \cite{R86}, with $\psi(\theta)=\max\left(0,\min\left(1,\theta\right)\right)$ and $\mu=1$;
\item the Koren limiter \cite{K93}, with $\psi(\theta)=\max\left(0,\min\left(1,\frac{1}{3}+\frac{1}{6}\theta,\theta\right)\right)$ and $\mu=1$;
\item the monotonized central limiter \cite{L77}, with $\psi(\theta)=\max\left(0,\min\left(2\theta,\frac{1+\theta}{2},2\right)\right)$ and $\mu=2$.
\end{itemize}

In this work we do not deal with the influence of boundary conditions, so for simplicity
we consider periodic boundaries.  The initial-boundary-value problem corresponding to \eqref{eq:fluxdiff} can thus be written,
with some suitable functions $q_k$, in the form
\begin{subequations}\label{eq:ODEs}
\begin{equation}
\begin{cases}
    u'_k(t)\  =   \displaystyle \ q_k(u(t),t)\ \frac{u_{k-1}(t)-u_k(t)}{\Delta x},&\quad k=1,\ldots,N, \\
    u_0(\cdot)\ :=u_N(\cdot), &  \\
    u_k(t_0) := u_k^0  & \quad k=1,\ldots,N.  \\
\end{cases}
\end{equation}
We assume that the initial conditions are non-negative
\begin{align}
    u_k^0   \ge \  0,   & \quad k=1,\ldots,N
\end{align}
and that \eqref{eq:Hundspositivity2} is satisfied, so that
\begin{align}
    0 \le q_k(v,t) \le \lambda_k(\mu+1)
\end{align}
\end{subequations}
holds for all $v, t$, with
$\lambda_k := \sup_{u\in[u_{k-\frac{1}{2}},u_{k+\frac{1}{2}}]} f'(u)$.
Under these assumptions, it can be shown that if the 
solution $u$ exists (which can be guaranteed, for instance, by assuming that
$q_k$ is Lipschitz), then it is positive \cite{K10}:
we have $u_k(t)\ge 0$ for $1\le k\le N$. 

Suppose now that we discretize \eqref{eq:ODEs} in
time with a given explicit Runge--Kutta method, and we want to preserve positivity. 
To achieve this, we aim to determine a value
$\Delta t_0>0$ such that the RK solution and stage values are non-negative
when applied to any problem of the form \eqref{eq:ODEs},
under some step-size restriction $\Delta t\leq \Delta t_0$. 

In the following examples we use the simplest RK method and give a suitable
value for $\Delta t_0$. 

\begin{exm}[Advection equation]\label{1stexample}
The forward Euler (FE) method preserves positivity for \eqref{eq:ODEs}
whenever
$$
\Delta t \leq \frac{\Delta x}{(\mu+1)\sup a(\cdot)} =: \tau_0.
$$
In other words, if $v$ denotes the vector $(v_1,\ldots,v_N)$ and we set $v_0:=v_N$, then
\[
v_k+\Delta t\cdot q_k(v,t)\ \frac{v_{k-1}-v_k}{\Delta x}\geq 0\ \text{for all}\ 0<\Delta t\leq \tau_0,\ v\geq 0
\textrm{ and } t\in [t_0,T],
\]
where $v\ge 0$ is understood componentwise.
\end{exm}

\begin{exm}[Scalar conservation law]\label{2ndtexample}
The FE method preserves positivity for \eqref{eq:ODEs} whenever
\begin{align*}
    \Delta t \leq \frac{\Delta x}{(\mu+1)\sup f'(U(\cdot,t_0))} =: \tau_0.
\end{align*}
\end{exm}

Given an ERK method, it is natural to consider the factorization 
\begin{equation}\label{factorization}
\Delta t\le \Delta t_0\equiv \gamma \tau_0
\end{equation} 
of the maximum allowed step size $\Delta t_0$, with $\tau_0$ appearing in Example \ref{1stexample} or 
\ref{2ndtexample}, 
because 
(i) this product structure naturally arises during the computations (see \eqref{xiproductform}), and (ii) 
the factor $\gamma$, referred to as the (positivity) step-size coefficient, will depend only on the chosen ERK method, whereas $\tau_0$ depends on the right-hand
side of the problem \eqref{eq:ODEs}. We will sometimes write $\gamma(A,b)$, where $(A,b)$ refer to the Butcher
coefficients of the RK method, to emphasize that the step-size coefficient
depends on the method coefficients. 
Our general goal in this paper is to find the maximum value of $\gamma\ge 0$.

From \eqref{factorization} it is also apparent that $\gamma$ plays a role similar to that of the SSP coefficient of the RK
method, denoted by $\ssp$, see \cite{David11}.  Specifically, an RK method is guaranteed to preserve positivity for
any positive system of ODEs with a general right-hand side $f(u(t),t)$, under the step-size restriction $\Delta t \le \ssp\Delta t_\textup{FE}$,
where $\Delta t_\textup{FE}$ is the positivity step-size threshold for the
forward Euler method. 
We will see that, for many RK methods, $\gamma$ is strictly 
larger than the SSP coefficient, because in determining a value $\gamma$ we
consider only problems of the form \eqref{eq:ODEs}. 

The present work is related to that presented in \cite{H13},
in that it proves less restrictive step sizes for positivity of a
particular class of problems.  However, the class of problems considered 
herein is different from the class of problems considered there.

\section{Positivity of the Runge--Kutta approximations}\label{sec:necsuff}
In this section we generalize and make systematic an approach that was
introduced in \cite{K10,K15}.  
Applying an explicit RK method to \eqref{eq:ODEs} we obtain
\begin{subequations} \label{eq:RK}
\begin{align}\label{eq:RK1}
y^i_k =     & u^n_k+\sum_{j=1}^{i-1}a_{ij}\xi_k^j\left(y^j_{k-1}-y^j_k\right),\quad k=1,\ldots,N,\ i=1,\ldots,m, \\
u^{n+1}_k = & u^n_k+\sum_{i=1}^{m}b_i   \xi_k^i\left(y^i_{k-1}-y^i_k\right),\quad k=1,\ldots,N, \label{eq:RK2}
\end{align}
\end{subequations}
where $u^n\approx u(t_n)$, $t_n=n\Delta t$ and 
\begin{equation}\label{xiproductform}
    \xi_k^j := \frac{\Delta t}{\Delta x} q_k(y^j,t_n+c_j\Delta t), \quad  j  = 1,\ldots,m.
\end{equation}
Here and throughout this work,
superscripts on $y$, $u$ and $\xi$ denote indices rather than exponents.
The coefficients $a_{ij}$ and $b_j$ can be conveniently represented by an
$m \times m$ strictly lower-triangular matrix $A$ and an $m\times 1$ vector $b$, respectively
 (referred to as the Butcher tableau $(A,b)$).

The idea is to express each component of the stages and of
the new solution as a combination of the previous solution values in the form
\begin{align}\label{eq:poly}
    u^{n+1}_k=\sum_{i=0}^{m} P_i(\xi) u^n_{k-i},
\end{align}
where the functions $P_i$ are multivariable polynomials depending only on the
method coefficients $(A,b)$, and $\xi$ is proportional to the time step size.
These polynomials are independent of $k$ since the method is translation-invariant
(though the values of $\xi$ do depend on $k$, in general).
We then aim to compute the largest step size that ensures non-negativity of these
polynomials.

Given a strictly lower-triangular matrix $A\in\mathbb{R}^{m\times m}$ and
vector $b\in\mathbb{R}^m$, it can be shown (see Section \ref{sec:polycomp}) that
the polynomials $P_i$ in \eqref{eq:poly} are multilinear functions of the
variables $\xi^j_\ell$, and there are $m+1$ polynomials and $m(m+1)/2$ variables.
Throughout this work we use the following ordering of the components of the vector $\xi$:
\[
\xi=(\xi^1_{k-(m-1)}, \xi^1_{k-(m-2)}, \ldots,\xi^1_{k-1}, \xi^1_{k};
\xi^2_{k-(m-2)}, \xi^2_{k-(m-3)}, \ldots, \xi^2_{k-1}, \xi^2_{k};\ldots; \xi^{m-1}_{k-1}, \xi^{m-1}_{k};\xi^m_{k}).
\]

\subsection{Computation of the multivariable polynomials}\label{sec:polycomp}

The polynomials $P_i$ can be obtained directly from \eqref{eq:RK}.
To anticipate their structure, we rewrite \eqref{eq:RK}
in matrix form.  We begin by rewriting \eqref{eq:RK1} as
\begin{align}\label{eq:Y}
	Y = e\otimes u^n + (A\otimes I_N) Q_{mN} (I_m\otimes D_N)Y,
\end{align}
where the vector $e:=(1,\ldots,1)\in\mathbb{R}^m$, $I$ is the identity matrix, the symbol $\otimes$ denotes the Kronecker product and subscripts denote the
dimensions of each matrix. 
The matrix $D_N$ is the $N\times N$ cyclic tridiagonal matrix with entries $1,-1,0$ such
that $[D_N]_{1,N}=1$, and $Q_{mN}$ is the $mN\times mN$ diagonal matrix with
entries $\xi^{1}_1,\xi^{1}_2,\dots,\xi^{1}_N, \dots, \dots ,\xi^{m}_N$. 
Let
$$M: = (A\otimes I_N) Q_{mN} (I_m\otimes D_N),$$
so that \eqref{eq:Y} becomes
\[
    Y=(I_{mN}-M)^{-1}(e \otimes u^n).
\]
Since $A$ is now strictly lower-triangular, the matrix  $M$ is strictly
block lower-triangular, and we can write
$$
Y = \sum_{i=0}^{\infty}M^i\left(e\otimes u^n\right) = \sum_{i=0}^{m-1}M^i \left(e\otimes u^n\right).
$$
Thus \eqref{eq:RK2} becomes
\begin{align}\label{eq:ERKstep-alt}
u^{n+1}=\left(I_N+(b^\top\otimes I_N) Q_{mN}(I_m\otimes D_N)\sum_{i=0}^{m-1}M^i(e\otimes I_N)\right)u^n.
\end{align}
\noindent\textbf{Remark.}\textit{
By using the relation
$$M^i = (A\otimes I_N) Q_{mN}\left((A\otimes D_N) Q_{mN}\right)^{i-1}  (I_m\otimes D_N),$$
we can also write \eqref{eq:ERKstep-alt} as
\[
u^{n+1} = \left( I_N + (b^\top\otimes I_N) Q_{mN} \sum_{i=0}^{m-1}\left( (A\otimes D_N) Q_{mN}\right)^{i}(e \otimes D_N) \right) u^n.
\]}

\subsection{A sufficient condition for non-negativity}\label{sect:suffcond}
 
In the present work we focus on positivity preservation, but---as we will see in Theorem \ref{thm:invariance} below---the step-size restrictions derived preserve a stronger property.  

First notice that, by consistency,
for any RK method we have
$$\sum_{i=0}^m P_i(\xi) = 1.$$
Thus if $P_i(\xi)\ge 0$, then \eqref{eq:poly} shows that each solution value is a convex combination of
solution values from the previous step. This motivates the following definition.

\begin{defn}\label{defn1}
Let an explicit Runge--Kutta method be given with coefficients $A, b$, and let
$P_i$ denote polynomials defined implicitly by \eqref{eq:poly} when the method
is applied to \eqref{eq:ODEs} and $\xi$ is defined by \eqref{xiproductform}.
The (positivity) step-size coefficient of the method is
\[
\gamma(A,b):=\sup\{\delta\ge 0 :  P_i(\xi)\ge 0 \textrm{ for each } 0\le i\le m \textrm{ and all } \xi\in[0,\delta]^{m(m+1)/2}\}. 
\]
We sometimes write simply $\gamma$, omitting the dependence on the method
coefficients when there is no potential for ambiguity.
If the set appearing in the $\sup$ above is empty, we set $\gamma:=0$. 
\end{defn}

\noindent Geometrically, the step-size coefficient is the edge length of the largest hypercube in the non-negative orthant over which the polynomials $P_i$ are all non-negative. 

From the above considerations it is clear that we have the following theorem.

\begin{thm} \label{thm:invariance}
Let an $m$-stage ERK method be given and let the polynomials $P_0,\dots,P_m$
be such that application of the method to \eqref{eq:ODEs} yields \eqref{eq:poly}.
Suppose that the time and space step sizes are chosen so that
\begin{equation}\label{timesteprestriction}
0\leq \Delta t\frac{q_k(u,t)}{\Delta x}\leq \gamma,\quad k=1,\ldots,N,
\end{equation}
for all values $u, t$.  
Let $u_\text{max}=\max_k(u_k(t_0))$, $u_\text{min}=\min_k(u_k(t_0))$.  
Then the solution given by the Runge--Kutta method applied
to \eqref{eq:ODEs} remains in the interval $[u_\text{min}, u_\text{max}]$.
\end{thm}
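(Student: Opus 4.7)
The plan is to proceed by a straightforward induction on the time step $n$, using the representation \eqref{eq:poly} together with the definition of $\gamma$ and the consistency of the Runge--Kutta method. The base case is immediate: the initial data $u^0_k$ lies in $[u_\text{min}, u_\text{max}]$ by definition of these extrema.

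For the inductive step, I would assume that $u^n_k \in [u_\text{min}, u_\text{max}]$ for all $k$, and then argue that the representation
\[
u^{n+1}_k = \sum_{i=0}^m P_i(\xi)\, u^n_{k-i}
\]
expresses $u^{n+1}_k$ as a convex combination of the previous solution values. Two facts make this work. First, by consistency of the ERK method, $\sum_{i=0}^m P_i(\xi) = 1$, as already noted just before Definition \ref{defn1}. Second, one needs $P_i(\xi) \geq 0$ for every $i$. For this I would observe that $\xi^j_k = (\Delta t/\Delta x)\, q_k(y^j, t_n+c_j\Delta t)$ by \eqref{xiproductform}; the hypothesis \eqref{timesteprestriction} is assumed to hold for \emph{all} arguments $u, t$, so in particular the bound $0 \leq \xi^j_k \leq \gamma$ holds regardless of the stage values $y^j$ that actually arise. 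Hence $\xi \in [0,\gamma]^{m(m+1)/2}$, and Definition \ref{defn1} gives $P_i(\xi) \geq 0$ for each $i$.

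With these two ingredients, $u^{n+1}_k$ is a convex combination of values in $[u_\text{min}, u_\text{max}]$, so it remains in the interval. Iterating the inductive step over $n$ then yields the conclusion of the theorem.

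The only substantive point that needs care is the legitimacy of the representation \eqref{eq:poly} itself, that is, that one can genuinely eliminate the intermediate stage values and write the update as an affine (in fact multilinear) function of $u^n_{k-i}$ alone, with coefficients $P_i(\xi)$ depending only on $(A,b)$ and $\xi$. This is precisely what Section \ref{sec:polycomp} establishes via the Neumann-series expansion of $(I_{mN}-M)^{-1}$ (which terminates because $A$ is strictly lower-triangular and hence $M$ is nilpotent), so I would simply cite that derivation rather than re-deriving it. Beyond that, the argument is essentially a discrete maximum principle for convex combinations, and I expect no further obstacle.
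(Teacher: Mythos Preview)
Your proposal is correct and matches the paper's own approach: the paper simply remarks that ``from the above considerations it is clear'' that the theorem holds, the ``above considerations'' being precisely the consistency identity $\sum_i P_i(\xi)=1$ together with the non-negativity of each $P_i(\xi)$ on $[0,\gamma]^{m(m+1)/2}$, which yields the convex-combination (discrete maximum principle) argument you spell out. Your observation that the hypothesis \eqref{timesteprestriction} is assumed for \emph{all} $u,t$---so that $\xi\in[0,\gamma]^{m(m+1)/2}$ regardless of the actual stage values---is exactly the right way to close the loop, and your appeal to Section~\ref{sec:polycomp} for the representation \eqref{eq:poly} is appropriate.
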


The invariance property appearing in Theorem \ref{thm:invariance}---that is, preservation of the interval containing the initial data---is the
discrete analog of a property known to hold for the exact
solution of \eqref{eq:conslaw}.  As a special case, when $u_\text{min}\ge 0$ we
have the property of positivity preservation.

\subsection{Necessary conditions for non-negativity}\label{sect:neccond}
Given a Runge--Kutta method $(A,b)$ and corresponding coefficient
$\gamma(A,b)$, we may ask whether taking the step size larger than
that permitted by \eqref{timesteprestriction} will always lead to negative
solution values.  The answer is ``no'', but we can construct particular
problems of the form \eqref{eq:ODEs} such that negative values will be
obtained.

The following theorems apply to non-confluent methods.
A Runge--Kutta method is said to be {\em non-confluent} if
the stage approximations all correspond to distinct times; i.e.,
if there are no distinct $i,j$ such that $c_i=c_j$.
\begin{thm}
    Let an explicit non-confluent Runge--Kutta method $(A,b)$ be given
    and let $\gamma(A,b)$ denote its step-size coefficient.
    Then for any $\tilde{\gamma}>\gamma$, there exists a problem
    \eqref{eq:ODEs}, a function $q = q(t)$ and a step size satisfying
    $$0 < \Delta t \le \tilde{\gamma} \frac{\Delta x}{\max_{k,t} q_k}$$
    such that the prescribed method
    leads to a negative solution at the first step.  
\end{thm}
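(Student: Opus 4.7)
My plan is to invert Theorem \ref{thm:invariance}: since positivity is guaranteed exactly when the scaled step sizes lie in the cube $[0,\gamma]^{m(m+1)/2}$ and make every $P_i$ non-negative, any violation of the bound $\gamma$ must be traceable to a specific $\xi^\star$ outside that cube where some $P_{i_0}$ is negative, and then to a concrete instance of \eqref{eq:ODEs} that realizes $\xi^\star$. First I would invoke the definition of $\gamma(A,b)$ as a supremum: for any $\tilde{\gamma}>\gamma(A,b)$, the non-negativity property must fail on $[0,\tilde{\gamma}]^{m(m+1)/2}$, yielding a point $\xi^\star$ in that cube and an index $i_0\in\{0,\dots,m\}$ with $P_{i_0}(\xi^\star)<0$. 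Since each $P_i$ is multilinear in the components of $\xi$ (as recorded after \eqref{eq:poly}), its minimum on a box is attained at a vertex, so I may take $\xi^\star$ to have entries in $\{0,\tilde{\gamma}\}$; consistency forces $P_i(0)\ge 0$, so at least one coordinate of $\xi^\star$ equals $\tilde{\gamma}$.

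Next I would realize $\xi^\star$ in \eqref{eq:ODEs}. Fixing $\Delta t$ and $\Delta x$, the key use of non-confluence is that the stage times $t_n+c_1\Delta t,\dots,t_n+c_m\Delta t$ are pairwise distinct, so a smooth non-negative function of $t$ can interpolate arbitrary prescribed values at these $m$ points. For each spatial index $k$ appearing in the stencil I then construct such an interpolant $q_k(t)$ satisfying $q_k(t_n+c_j\Delta t)=\xi^{\star,j}_k\,\Delta x/\Delta t$ for every relevant $j$, and use $q_k(u,t):=q_k(t)$ in \eqref{eq:ODEs}. Each $\xi_k^j$ from \eqref{xiproductform} then matches the prescribed coordinate of $\xi^\star$, and the maximum of $\Delta t\,q_k/\Delta x$ equals $\max_{j,k}\xi^{\star,j}_k\le\tilde{\gamma}$, which is exactly the step-size bound in the claim.

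To produce the negative solution value, let $K$ be the spatial index whose stencil was targeted by $\xi^\star$ and take the initial datum $u^0_{K-i_0}=1$, $u^0_{K-i}=0$ for $i\ne i_0$, with the remaining entries filled in by non-negative values (for instance zero) subject to the periodic wrap-around. Formula \eqref{eq:poly} then collapses to $u^1_K = P_{i_0}(\xi^\star) < 0$, delivering the desired negativity after one time step.

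The main obstacle is a tension with the literal reading ``a function $q=q(t)$''. If $q$ must be a single scalar function independent of $k$, then \eqref{xiproductform} forces $\xi_k^j$ to be independent of $k$, confining the realizable $\xi$ to an $m$-dimensional diagonal of the $m(m+1)/2$-dimensional cube. One must then strengthen the first step to find a negative vertex of some $P_i$ lying on that diagonal. I expect this to follow from the translation-invariance of the scheme: spatially shifting the stencil permutes variables within each $P_i$ while leaving its infimum unchanged, so averaging $P_i$ over these shifts---or reading the diagonal structure directly off the matrix form \eqref{eq:ERKstep-alt}, where the product $(A\otimes I_N)\,Q_{mN}\,(I_m\otimes D_N)$ becomes a circulant-like object once $\xi_\ell^j=\eta^j$---reduces the search to the diagonal without loss of sharpness. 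Executing this symmetry reduction cleanly and verifying non-negativity of the interpolated $q(t)$ is the technical heart of the argument.
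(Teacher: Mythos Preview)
Your first three paragraphs are essentially the paper's proof: pick $\tilde\xi\in[0,\tilde\gamma]^{m(m+1)/2}$ with $P_{i_0}(\tilde\xi)<0$, use non-confluence to prescribe $q_k$ independently at the distinct stage times so that \eqref{xiproductform} reproduces $\tilde\xi$, and take a Kronecker-delta initial datum so that \eqref{eq:poly} collapses to $u^1_K=P_{i_0}(\tilde\xi)<0$. The paper does this with $N=m$, $\Delta x=\Delta t=1$, and simply sets $q(\cdot,t_0+c_j\Delta t)=\tilde\xi^j$ (the $j$th spatial slice of $\tilde\xi$). Your extra reduction to a vertex via multilinearity is correct but unnecessary.

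Your last paragraph rests on a misreading. In this paper ``$q=q(t)$'' means the functions $q_k(u,t)$ in \eqref{eq:ODEs} are independent of the state $u$, \emph{not} independent of the spatial index $k$; the paper's own proof writes $q(\cdot,t_0+c_j\Delta t)=\tilde\xi^j$ as a vector in $k$. So no symmetry reduction to a spatially constant $q$ is needed. In fact that reduction would fail in general: with $\xi^j_k$ independent of $k$ the relevant positivity threshold is the modified threshold factor $R_{\mathcal M}(A,b)$ of \cite{H13}, which is typically strictly larger than $\gamma(A,b)$, so a negative point on the diagonal need not exist for every $\tilde\gamma$ in $(\gamma,R_{\mathcal M}]$. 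Drop that paragraph and the proof is complete.
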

\begin{proof}
    By our assumptions, there exist
    $\tilde{\xi} \in [0,\tilde{\gamma}]^{m(m+1)/2}$
    and an integer $j$ such that $P_j(\tilde{\xi}) < 0$.

    Take $N=m$ and
    $$u^0_k := u_k(t_0) = \begin{cases}  1 & \text{if } k=N-j \\
                                         0 & \text{otherwise.} \end{cases}$$
    Set $\Delta x =1$, $\Delta t = 1$, and 
    $q(\cdot,t_0+c_j\Delta t) = \tilde{\xi}^j$ for each $j$.
    Then direct computation reveals that
    $u_N^1 = P_j(\tilde{\xi})u_{N-j}^0 = P_j(\tilde{\xi}) < 0.$
\end{proof}

\begin{cor}
    Let an explicit non-confluent Runge--Kutta method be given.
    Then the time-step restriction \eqref{timesteprestriction}
    is sharp; i.e., it is the largest step size that guarantees
    positivity for all problems of the form \eqref{eq:ODEs}.
\end{cor}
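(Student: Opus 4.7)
The plan is to derive the corollary directly from the preceding theorem, combined with the sufficiency result of Theorem \ref{thm:invariance}. The statement has two halves: step-size coefficient $\gamma(A,b)$ is (i) a sufficient bound and (ii) cannot be replaced by anything larger. Part (i) is exactly Theorem \ref{thm:invariance}, which already guarantees that the Runge--Kutta solution stays non-negative (in fact, stays in the interval $[u_\text{min},u_\text{max}]$) whenever \eqref{timesteprestriction} holds. So the only content to justify here is the necessity, part (ii).

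For part (ii), I would argue by contraposition. Suppose some $\tilde\gamma > \gamma(A,b)$ were also sufficient, i.e., suppose the restriction $0 \le \Delta t\, q_k/\Delta x \le \tilde\gamma$ guaranteed positivity for every problem of the form \eqref{eq:ODEs}. The preceding theorem directly contradicts this: it produces an explicit problem of the form \eqref{eq:ODEs}, together with a coefficient function $q(t)$ and a step size $\Delta t$ satisfying $0 < \Delta t \le \tilde\gamma\, \Delta x/\max_{k,t} q_k$, for which the first Runge--Kutta step yields a negative component $u_N^1 = P_j(\tilde\xi) < 0$. Hence no $\tilde\gamma > \gamma(A,b)$ can be sufficient, which is exactly the sharpness claim.

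There is essentially no obstacle: the corollary is a packaging of Theorem \ref{thm:invariance} (sufficiency) with the preceding theorem (necessity via an explicit counterexample), and non-confluence is needed only insofar as the preceding theorem requires it, so that distinct stages can be realized through a time-dependent $q$. I would write the proof as a single short paragraph pointing to these two results.
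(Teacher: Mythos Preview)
Your proposal is correct and matches the paper's intent: the corollary is stated without proof, as an immediate consequence of Theorem~\ref{thm:invariance} (sufficiency) together with the preceding theorem (necessity via an explicit counterexample for any $\tilde\gamma>\gamma$). Your identification of the two halves and the role of non-confluence is exactly right.
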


The next result involves the concept of DJ-reducibility; see e.g.~\cite{dahlquist2006} for a definition. 
 Note that any reducible method is
equivalent to a method with fewer stages, so the irreducibility assumption here
is no essential restriction.

\begin{thm}\label{nonconfluenttheorem}
        Let an explicit non-confluent DJ-irreducible Runge--Kutta method
        $(A,b)$ be given such that at least one entry in the matrix $A$ or
        vector $b$ is negative. Then there exist
    initial data and a choice of $q$ such that the numerical 
    solution of \eqref{eq:ODEs} obtained with the method includes a negative
    value.  Therefore $\gamma(A,b)=0$.  
\end{thm}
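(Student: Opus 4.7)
The plan is to prove that $\gamma(A,b)=0$, from which the claimed bad initial data and coefficient function $q$ follow immediately by the construction used in the proof of the preceding non-confluent theorem. By Definition \ref{defn1}, establishing $\gamma(A,b)=0$ reduces to showing that for every $\epsilon>0$ there exist $\xi\in[0,\epsilon]^{m(m+1)/2}$ and an index $\ell$ with $P_\ell(\xi)<0$. I will choose $\xi$ in the ``indicator form'' $\xi_v=\epsilon$ on some coordinate set $S$ and $\xi_v=0$ elsewhere; non-confluence then realizes $\xi$ as $q_k$-values at the distinct stage times $t_0+c_j\Delta t$, exactly as in the previous proof.

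The key structural fact is that each polynomial $P_\ell$ is multilinear and that every monomial appearing in $P_\ell$ has total degree at least $\ell$. This follows from the Runge--Kutta expansion \eqref{eq:RK}: each stage application contributes one $\xi$-factor, while only a ``shift'' step moves the spatial index down by one, and exactly $\ell$ shifts are required to reach the coefficient of $u^n_{k-\ell}$. A further inspection shows that every degree-$\ell$ monomial of $P_\ell$ comes from a unique ``all-shift'' path and thus takes the form
\[
b_{j_\ell}\,a_{j_\ell,j_{\ell-1}}\cdots a_{j_2,j_1}\;\xi^{j_\ell}_k\,\xi^{j_{\ell-1}}_{k-1}\cdots\xi^{j_1}_{k-\ell+1},\qquad j_1<j_2<\cdots<j_\ell,
\]
with no sign cancellation. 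If $S$ is the set of the $\ell$ coordinates in such a monomial, then the degree bound together with $|S|=\ell$ forces the only surviving term of $P_\ell$ on the indicator $\xi$ to be this one monomial, and hence $P_\ell(\xi)=c_S\,\epsilon^\ell$, where $c_S=b_{j_\ell}\prod_{t=1}^{\ell-1}a_{j_{t+1},j_t}$ is the associated chain product. The problem is thus reduced to exhibiting a chain $(j_1,\ldots,j_\ell)$ with $c_S<0$.

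A short case analysis produces such a chain. If some $b_i<0$, the length-one chain $(i)$ already suffices. Otherwise $b_i\ge 0$ for all $i$ and, by hypothesis, $a_{I,J}<0$ for some $I>J$. DJ-irreducibility applied to stage $I$ yields a chain $(I,i_1,\ldots,i_q)$ ($q\ge 0$, with $q=0$ meaning $b_I>0$) such that the intermediate $a$-entries are all nonzero and $b_{i_q}>0$. Setting $C:=b_{i_q}\,a_{i_q,i_{q-1}}\cdots a_{i_1,I}$ (with $C=b_I$ when $q=0$), we have $C\neq 0$. If $C>0$, prepending $J$ gives the chain $(J,I,i_1,\ldots,i_q)$ of product $C\cdot a_{I,J}<0$; if $C<0$, the chain $(I,i_1,\ldots,i_q)$ already has negative product. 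In either subcase, the strict monotonicity of the chain together with $i_q\le m$ guarantees that the corresponding coordinates $\xi^{j_t}_{k-(t-1)}$ lie in the admissible index set $\{\xi^j_{k-s}:0\le s\le m-j\}$ underlying the definition of $\xi$.

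The main technical obstacle is the second case: DJ-irreducibility is needed precisely to link stage $I$ to a nonzero weight $b_{i_q}$ through an ascending chain, and one must then track the sign of the chain product both before and after prepending $J$. Once a negative-product chain is secured, multilinearity and the degree bound make $P_\ell$ negative at the indicator $\xi$ for every $\epsilon>0$, which yields $\gamma(A,b)=0$ and, via the previous theorem's construction applied to this $\xi$, the required problem with a negative numerical solution after one step.
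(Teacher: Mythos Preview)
Your proof is correct in substance and is close in spirit to the paper's, but it is packaged differently. The paper argues directly at the level of the stage values: it picks initial data supported at a single grid point and defines $q_p(\cdot,t)$ as a time indicator (using non-confluence), then computes $y^i_p$ explicitly to see that the first negative coefficient $a_{iJ}$ produces a negative stage value, which is then carried to the output either directly (when $b_i>0$) or via a DJ-irreducibility chain. You instead work through the polynomials $P_\ell$: you identify the degree-$\ell$ monomials of $P_\ell$ as exactly the chain products $b_{j_\ell}a_{j_\ell,j_{\ell-1}}\cdots a_{j_2,j_1}$, observe that an indicator $\xi$ supported on one chain's variables isolates that monomial (by multilinearity and the degree lower bound), and then use DJ-irreducibility to manufacture a chain with negative product. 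Both arguments rest on the same two ingredients---non-confluence to realize indicator $\xi$'s via $q(t)$, and DJ-irreducibility to link a negative $a_{I,J}$ to a nonzero weight---so the difference is mainly one of presentation: your version makes the polynomial structure explicit and then appeals to the preceding theorem, whereas the paper computes the offending step by hand.

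One small slip to fix: in your final paragraph you index the chain coordinates as $\xi^{j_t}_{k-(t-1)}$, but the monomial you displayed earlier pairs $j_t$ with spatial index $k-(\ell-t)$. The admissibility check you actually need is $\ell-t\le m-j_t$, and this does follow from $j_1<\cdots<j_\ell\le m$ since the $j_t$ are distinct integers.
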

\begin{proof}
Let a RK method be given as in the theorem.
We take initial data vector $v$ with $v_{p-1}=1$ where $p$ is an arbitrary 
grid index and
$v_k=0$ for all $k\ne p-1$.  
To simplify the presentation, in this proof only we define
$a_{m+1,j}:= b_j$ and $y_{m+1}:= u^{n+1}$.
Let $i$ be the index of the first RK stage with a negative
coefficient and suppose that $a_{iJ}<0$.  We have
\begin{align*}
	y^i_p & = v_p + \sum_{j=1}^m a_{ij} q_p(y^j,t_0+c_j \Delta t) \frac{\Delta t}{\Delta x} (y^j_{p-1} - y^j_p).
\end{align*}
We let
\begin{align*}
	q_p(u,t) = \begin{cases} 0 & t \ne t_0 + c_J \Delta t \\
    					   1 & t = t_0 + c_J \Delta t,
    \end{cases}
\end{align*}
so that
\begin{align*}
	y^i_p & = v_p + a_{iJ}  \frac{\Delta t}{\Delta x} (y^J_{p-1} - y^J_p).
\end{align*}
Furthermore, for all $j\le J$ we have $y^j=v$; in particular,
$y^J=v$.  Thus
\begin{align*}
	y^i_p & = v_p + a_{iJ}  \frac{\Delta t}{\Delta x} (v_{p-1} - v_p) = a_{iJ} \frac{\Delta t}{\Delta x} < 0.
\end{align*}
If $i=m+1$ (i.e., if there is some $i$ such that $b_i<0$), then we are done.

Suppose to the contrary that $b_j \ge 0$ for all $j$.  Suppose further
that $b_i \ne 0$.  Then by letting
\begin{align*}
	q_{p+1}(u,t) = \begin{cases} 0 & t \ne t_0 + c_i \Delta t \\
    					   1 & t = t_0 + c_i \Delta t,
    \end{cases}
\end{align*}
we obtain
\begin{align*}
    u^{n+1}_{p+1} = b_i \frac{\Delta t}{\Delta x} (y^i_p - y^i_{p+1}) < 0.
\end{align*}
The last inequality follows by deducing from the construction above that $y^i_{p+1}=0$.

Finally, suppose that $b_i=0$, so that stage $i$ is not used directly
to compute the new solution.  Since the method is DJ-reducible, there
exists some sequence of indices $i_1, i_2,\dots,i_r=m+1$ such that
$a_{i,i_1}a_{i_1,i_2} \cdots b_r \ne 0$, and by a similar construction
we can ensure that each of the stages $y_{i_1}, y_{i_2}, \dots, y_{i_r} = u^{n+1}$
has a negative entry.
\end{proof}

\subsection{Upper bounds for the step-size coefficient}

The step-size coefficient $\gamma$---depending only on the chosen RK method---is a constant
that guarantees non-negativity of the RK recursion under the time-step restriction 
\eqref{timesteprestriction} for the whole class of problems \eqref{eq:ODEs}.
We can find upper bounds for it by considering more restricted problem sets.

First we consider the constant-coefficient linear advection equation
$U_t + U_x = 0$
semi-discretized with first-order upwind differences, leading to an
ODE system $u'(t) = Lu(t)$ that is a special case of \eqref{eq:ODEs} with $q=1$.
A Runge--Kutta method applied to this problem results in the iteration
$$u^{n+1} = \varphi_{A,b}(\Delta t L) u^n,$$ 
where $(A,b)$ are the coefficients of the Runge--Kutta method and 
$\varphi_{A,b}$ is the stability function of the method.
It can be shown
(see \cite[Theorem 4.2]{David11}\label{thm:linstepsize}; this result
is also a straightforward adaptation of the seminal result in \cite{bolley1978})
that the matrix $\varphi_{A,b}(\Delta t L)$ is non-negative if and only if
$\Delta t \le R(\varphi) \Delta x,$ where $R(\varphi)$ is the {\em radius
of absolute monotonicity} of $\varphi$ (also known as the {\em threshold factor};
see the citations above).
This leads to
\begin{prop}\label{corRphi}
    The step-size coefficient $\gamma{(A,b)}$ is no larger than $R(\varphi_{A,b})$.
\end{prop}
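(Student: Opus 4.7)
The plan is to exhibit a specific problem within the class \eqref{eq:ODEs} whose positivity threshold is precisely $R(\varphi_{A,b})$; since $\gamma(A,b)$ must work uniformly over the whole class, the desired upper bound would follow immediately.

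First I would take $q_k(u,t)\equiv 1$, corresponding to the upwind semi-discretization of the constant-coefficient advection equation $U_t+U_x=0$. This is a legitimate instance of \eqref{eq:ODEs}, and by \eqref{xiproductform} every component of the vector $\xi$ collapses to the single value $\nu:=\Delta t/\Delta x$. In particular, $\xi=(\nu,\nu,\ldots,\nu)\in[0,\gamma(A,b)]^{m(m+1)/2}$ whenever $\nu\le\gamma(A,b)$.

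Next I would match the two descriptions of the RK step in this linear, translation-invariant setting. On the one hand, setting $Q_{mN}=\nu I_{mN}$ in \eqref{eq:ERKstep-alt} reduces the iteration to $u^{n+1}=\varphi_{A,b}(\nu L)u^n$, where $L$ is the cyclic upwind-difference operator. On the other hand, \eqref{eq:poly} with $\xi=(\nu,\ldots,\nu)$ writes the same iteration as $u^{n+1}_k=\sum_{i=0}^{m}P_i(\nu,\ldots,\nu)u^{n}_{k-i}$. Comparing, the nonzero entries in each row of $\varphi_{A,b}(\nu L)$ are exactly the values $P_i(\nu,\ldots,\nu)$.

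To conclude, fix any $\nu\in[0,\gamma(A,b)]$. By Definition \ref{defn1} each $P_i(\nu,\ldots,\nu)\ge 0$, so the matrix $\varphi_{A,b}(\nu L)$ is entrywise non-negative. The quoted characterization (Theorem~4.2 of \cite{David11}) then forces $\nu\le R(\varphi_{A,b})$. Taking $\nu\uparrow\gamma(A,b)$ yields $\gamma(A,b)\le R(\varphi_{A,b})$. The only substantive step is the matrix-polynomial identification in the third paragraph, and even that is essentially a direct unfolding of the Kronecker-product formula; I do not expect any real obstacle beyond careful bookkeeping.
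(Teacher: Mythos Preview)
Your proposal is correct and follows essentially the same route as the paper: the argument there is the paragraph immediately preceding the proposition, which specializes to $q\equiv 1$ (constant-coefficient advection with upwind differencing), identifies the RK step with $u^{n+1}=\varphi_{A,b}(\nu L)u^n$, and invokes the cited characterization of when this matrix is non-negative. Your version is a bit more explicit in spelling out that the row entries of $\varphi_{A,b}(\nu L)$ coincide with the values $P_i(\nu,\dots,\nu)$, but the underlying idea is identical.
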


Our step-size coefficient is also upper-bounded by the {\em modified threshold factor}
$R_{\mathcal M}(A,b)$ described in \cite{H13}.  This can be seen
by considering an advection equation with time-dependent advection speed, which
leads to a semi-discretization that fits in the class of problems considered in
\cite{H13}.  This second bound is in general sharper, but its value
is known only for a small number of methods (see \cite[Table 2]{H13}).


\section{Step-size coefficients for second-order methods}\label{sect:ERK(2,2)}

In this section we summarize and comment on the results of \cite{K10} about comparing the optimal
SSP coefficient $\ssp$ and the optimal step-size coefficient
 $\gamma$ within the class of two-stage second-order ERK methods (ERK(2,2)).
It is known \cite[Section 32]{Butcher08} that all ERK(2,2) methods can be described by a one-parameter family with Butcher tableau 
\begin{equation}\label{ButcherERK22}
A = 
  \begin{pmatrix} 0 & 0 \\ \alpha & 0 \end{pmatrix},\quad
  b^\top=\begin{pmatrix} 1-\frac{1}{2\alpha},  \frac{1}{2\alpha} \end{pmatrix},
\end{equation}
where $\alpha\in \mathbb{R}\setminus\{0\}$. 

The SSP coefficient of an ERK(2,2) method with parameter 
$\alpha$ is determined in \cite{K10} as\footnote{Please note that the derivation of this formula in \cite[Section 2]{K10} (denoted by $\gamma(\kappa)$ there) contains some inconsistencies.}
\begin{equation}\label{Calphaexplicit}
\ssp_\alpha=
\begin{cases}
0,& 0\ne \alpha<\frac{1}{2}\\
2-\frac{1}{\alpha}, & \frac{1}{2}\leq\alpha\leq 1\\
\frac{1}{\alpha}, & \alpha>1.
\end{cases}
\end{equation}
This means that we have the maximum SSP coefficient $\ssp_\alpha=1$ within the
ERK(2,2) family if and only if $\alpha=1$.  The corresponding RK method is
known as the improved Euler's method, explicit trapezoidal rule, or Heun's
method.

Now we turn our attention to the optimal step-size coefficient. 
When a two-stage ERK method with (strictly lower-triangular) Butcher matrix $A=(a_{jk})$ and vector $b=(b_j)$
is applied to \eqref{eq:ODEs},  the $k^\textrm{th}$ component of the solution in \eqref{eq:ERKstep-alt} becomes
\[
u^{n+1}_k=P_0(\xi) u^{n}_k+P_1(\xi) u^{n}_{k-1}+P_2(\xi) u^{n}_{k-2}
\]
with
\begin{subequations} \label{P_2stage}
\begin{align} 
P_0(\xi) =\ & 1-b_1\xi_k^{1}-b_2\xi_k^{2}+a_{21}b_2\xi_k^{1}\xi_k^{2}, \nonumber\\
P_1(\xi) =\ & b_1\xi_k^{1}+b_2\xi_k^{2}-a_{21}b_2\xi_{k-1}^{1}\xi_k^{2}-a_{21}b_2\xi_k^{1}\xi_k^{2}, \label{eq:2stagecoeff}\\
P_2(\xi) =\ & a_{21}b_2\xi_{k-1}^{1}\xi_k^{2}.\nonumber
\end{align}
By using \eqref{ButcherERK22}, for ERK(2,2) methods we obtain
\begin{align} 
P_0\left(\xi_{k-1}^{1},\xi_k^{1},\xi_k^{2}\right) =\  & 1-\left(1-\frac{1}{2\alpha}\right)\xi_k^{1}-\frac{1}{2\alpha}\xi_k^{2}+\frac{1}{2}\xi_k^{1}\xi_k^{2}, \nonumber\\
P_1\left(\xi_{k-1}^{1},\xi_k^{1},\xi_k^{2}\right) =\  & \left(1-\frac{1}{2\alpha}\right)\xi_k^{1}+\frac{1}{2\alpha}\xi_k^{2}-\frac{1}{2}\xi_{k-1}^{1}\xi_k^{2}-\frac{1}{2}\xi_k^{1}\xi_k^{2}, \label{eq:2stagecoeffreduced}\\
P_2\left(\xi_{k-1}^{1},\xi_k^{1},\xi_k^{2}\right) =\  & \frac{1}{2}\xi_{k-1}^{1}\xi_k^{2}.\nonumber
\end{align}
\end{subequations}
The following proposition improves on \cite[Theorem 1]{K10} by establishing the sharpness of the step-size coefficient.
\begin{prop} \label{prop:ERK22}
    The positivity step-size coefficient for the family of methods 
    \eqref{ButcherERK22} is given by
\begin{equation}\label{gammainERK22}
    \gamma_\alpha=
    \begin{cases}
    0,& 0\ne \alpha<\frac{1}{2}\\
    1, & \frac{1}{2}\leq\alpha\leq 1\\
    \frac{1}{\alpha}, & \alpha>1.
    \end{cases}
\end{equation}
\end{prop}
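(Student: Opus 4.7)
The plan is to exploit the fact that each of $P_0,P_1,P_2$ in \eqref{eq:2stagecoeffreduced} is multilinear in the three variables, so its minimum over any cube $[0,\delta]^{3}$ is attained at one of the eight vertices. Writing $(x,y,z):=(\xi_{k-1}^{1},\xi_{k}^{1},\xi_{k}^{2})$, I first observe that $P_2=\tfrac{1}{2}xz\ge 0$ on $[0,\delta]^{3}$ automatically, so the entire analysis reduces to the vertex values of $P_0$ and $P_1$.

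For the lower bound $\gamma_\alpha\ge$ (claimed value)---already essentially the content of \cite[Theorem~1]{K10}---my approach is to write down the vertex evaluations of $P_0$ and $P_1$ explicitly and determine which ones actually cut out the feasible range of $\delta$. A short case analysis shows that, for $\alpha\ge\tfrac{1}{2}$, the binding constraints are $\delta\le 1$ (from $P_1$ at the corner $(\delta,\delta,\delta)$) and $\delta\le 1/\alpha$ (from $P_1$ at $(\delta,0,\delta)$); every other vertex constraint, including those coming from $P_0$, is slack on the relevant $\alpha$-range. Taking the minimum reproduces exactly \eqref{gammainERK22} for $\alpha\ge\tfrac{1}{2}$.

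For sharpness---the genuinely new content of the proposition---I would exhibit, in each of the four $\alpha$-regimes, a single vertex at which $P_1$ is strictly negative as soon as $\delta$ exceeds the claimed value. The natural candidates are: the point $(0,\delta,0)$, at which $P_1=(1-\tfrac{1}{2\alpha})\delta<0$ when $0<\alpha<\tfrac{1}{2}$; the point $(0,0,\delta)$, at which $P_1=\tfrac{\delta}{2\alpha}<0$ when $\alpha<0$; the point $(\delta,\delta,\delta)$, at which $P_1=\delta(1-\delta)<0$ for $\delta>1$ (handling the regime $\tfrac{1}{2}\le\alpha\le 1$); and the point $(\delta,0,\delta)$, at which $P_1=\tfrac{\delta}{2\alpha}(1-\alpha\delta)<0$ when $\alpha>1$ and $\delta>1/\alpha$. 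Each such witness produces an upper bound on $\gamma_\alpha$ that matches the lower bound, and combining the two directions establishes \eqref{gammainERK22}.

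The main obstacle here is not conceptual but combinatorial: correctly matching the binding vertex to the regime, and confirming that the dominated constraints (such as $\delta\le 2\alpha$ and $\delta\le\tfrac{2\alpha}{2\alpha-1}$ arising from $P_0$) are slack on the region of interest, so that neither the two $P_0$-vertices nor the mixed $P_1$-vertices like $(0,\delta,\delta)$ can force a smaller value of $\gamma_\alpha$. Once these auxiliary inequalities are checked, the four witnesses listed above close the argument.
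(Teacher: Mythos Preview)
Your proposal is correct and follows the same two-step scheme as the paper (lower bound plus sharpness witness), but your execution is cleaner in two respects worth noting.

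First, for the lower bound you invoke the multilinearity of $P_0,P_1,P_2$ to reduce non-negativity on $[0,\delta]^3$ to a finite vertex check; the paper simply cites \cite[Theorem~1]{K10} here. Your argument is self-contained and also makes transparent \emph{why} exactly the constraints $\delta\le 1$ and $\delta\le 1/\alpha$ are the binding ones, with the $P_0$-constraints $\delta\le 2\alpha$ and $\delta\le \tfrac{2\alpha}{2\alpha-1}$ dominated on the relevant ranges (the latter via $2\alpha^2-2\alpha+1>0$).

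Second, your sharpness witnesses are simpler than the paper's. For $\tfrac12\le\alpha\le 1$ you use the vertex $(\delta,\delta,\delta)$ giving $P_1=\delta(1-\delta)$, whereas the paper uses the non-vertex point $(1,1+\varepsilon,1+\varepsilon)$. For $\alpha>1$ you use the vertex $(\delta,0,\delta)$ giving $P_1=\tfrac{\delta}{2\alpha}(1-\alpha\delta)$, whereas the paper constructs a rather intricate interior point with second coordinate $\tfrac{\varepsilon(\alpha\varepsilon+2)}{2(4\alpha-4-\alpha\varepsilon)}$. Your vertex witnesses are the natural ones once the multilinearity reduction is on the table, and they make the equality $\gamma_\alpha=\min(1,1/\alpha)$ immediate for $\alpha\ge\tfrac12$. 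The cases $\alpha<0$ and $0<\alpha<\tfrac12$ are handled identically in both arguments.
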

\begin{proof}
Regarding the $\alpha<0$ case (not appearing explicitly in \cite[Theorem 1]{K10}), we observe
that 
\[
P_1\left(0,0,\varepsilon \right)=\frac{\varepsilon}{2 \alpha }<0\quad \textrm{for any } \alpha<0 \textrm{ and } \varepsilon>0,
\]
implying $\gamma_\alpha=0$ for $\alpha<0$.

For $0<\alpha<1/2$ we have $\gamma_\alpha=0$ because
\[
P_1\left(\varepsilon ,\varepsilon,0 \right)=\frac{(2 \alpha -1) \varepsilon }{2 \alpha }<0\quad \textrm{for any } \alpha\in \left(0, \frac{1}{2}\right) \textrm{ and } \varepsilon>0.
\]  

For $1/2\le\alpha\le 1$,  \cite[Theorem 1]{K10} shows that 
$P_i\ge 0$ ($i=0,1, 2$) for any $\xi \in [0,1]^3$, implying $\gamma_\alpha\le 1$ here. By noticing that
\[
2P_1(1,1+\varepsilon,1+\varepsilon)=-\varepsilon(1+\varepsilon)<0\quad \textrm{for any } \alpha\in [1/2,1] \textrm{ and } \varepsilon>0,
\]
we see that $\gamma_\alpha=1$ for $\alpha\in[1/2,1]$.

Finally, for $\alpha>1$,  \cite[Theorem 1]{K10} proves that 
$P_i(\xi)\ge 0$ ($i=0,1, 2$) for any $\xi \in [0,1/\alpha]^3$, implying $\gamma_\alpha\le 1/\alpha$. By
considering the inequality
\[
P_1\left(\frac{1}{\alpha }+\frac{3 \varepsilon
   }{4},\frac{\varepsilon  (\alpha  \varepsilon +2)}{2 (4 \alpha
   -4-\alpha \varepsilon)},\frac{1}{\alpha }+\frac{\varepsilon }{2}\right)=-\frac{\varepsilon  (\alpha  \varepsilon +2)}{16 \alpha }<0\quad \textrm{for any } \alpha>1  \textrm{ and } 
 \varepsilon \in \left(0,1-\frac{1}{\alpha}\right),
\]
and taking into account that here all three arguments of $P_1$ are located in the interval $\left(0,\frac{1}{\alpha}+\varepsilon\right)$, and that $\varepsilon>0$ can be chosen arbitrarily close to 0, we conclude that
$\gamma_\alpha= 1/\alpha$.
\end{proof}
\begin{rem}
Proposition \ref{prop:ERK22} provides a theoretical basis for observations in 
\cite{hundsdorfer1995positive}, where the methods \eqref{ButcherERK22} with
$\alpha=1$ and $\alpha=1/2$ were observed to behave identically with respect
to positivity.  Similar observations were made in \cite{2005_ketcheson_robinson}
regarding total variation.
\end{rem}

\begin{rem}\label{ERKcor2Remark} Proposition \ref{corRphi} directly yields that $\gamma_\alpha>1$ 
cannot hold for any $\alpha\ne 0$, since the radius of absolute monotonicity of the stability
function of any ERK(2,2) method is at most 1 (see \cite[Section 4.8]{David11}).
\end{rem}

As a conclusion, by comparing \eqref{Calphaexplicit} and \eqref{gammainERK22}, it is seen that the step-size coefficient is strictly larger than
the SSP coefficient for ERK(2,2) methods with $\alpha\in [1/2,1)$.

\section{Step-size coefficients for third-order methods}\label{sect:ERK(3,3)}

In this section we consider the class of three-stage third-order ERK methods (ERK(3,3)), and compare their
optimal SSP and step-size coefficients. The ERK(3,3) class is a disjoint union of three subclasses, referred to as
Cases I, II and III in \cite[Section 32]{Butcher08}. Case I is a two-parameter family of methods, while any of Cases II and III is a one-parameter family of methods.

It is known \cite[Section 2.4.2]{David11} that any ERK(3,3) 
method satisfies $\ssp_{\alpha,\beta}\le 1$,
and the optimal value $\ssp= 1$ is achieved only by the Case I method with
parameters $\alpha=1$ and $\beta=1/2$.

Regarding the optimum value of the step-size coefficient $\gamma$,  Proposition \ref{corRphi} shows that $\gamma\le 1$ should hold for any ERK(3,3) method, because the radius of absolute monotonicity of the stability function of any ERK(3,3) method is at most 1 (cf.~Remark \ref{ERKcor2Remark}). 

In the rest of this section we investigate whether $\gamma=1$ can be achieved in the ERK(3,3) family.  To this end, we first generate
the multivariable polynomials appearing in Definition \ref{defn1} (cf.~\eqref{eq:2stagecoeff}). On applying a three-stage ERK method to problem \eqref{eq:ODEs}, the
$k^\mathrm{th}$ component of the step solution in \eqref{eq:ERKstep-alt} is
$$u^{n+1}_k=P_0(\xi) u^{n}_k+P_1(\xi) u^{n}_{k-1}+P_2(\xi) u^{n}_{k-2}+P_3(\xi) u^{n}_{k-3},$$
where 
\begin{align} 
P_0(\xi)=&\ 1-b_1\xi_k^{1}-b_2\xi_k^{2}+a_{21}b_2\xi_k^{1}\xi_k^{2}-b_3\xi_{k}^{3}+a_{31}b_3\xi_{k}^{1}\xi_{k}^{3}+a_{32}b_3\xi_{k}^{2}\xi_{k}^{3}- \nonumber\\
&a_{32}a_{21}b_3\xi_{k}^{1}\xi_{k}^{2}\xi_{k}^{3}, \nonumber\\
P_1(\xi)=&\  b_1\xi_k^{1}+b_2\xi_k^{2}-a_{21}b_2\xi_{k-1}^{1}\xi_k^{2}-a_{21}b_2\xi_k^{1}\xi_k^{2}+b_3\xi_{k}^{3}-a_{31}b_3\xi_{k-1}^{1}\xi_{k}^{3}- \nonumber\\
&a_{32}b_3\xi_{k-1}^{2}\xi_{k}^{3}+a_{32}a_{21}b_3\xi_{k-1}^{1}\xi_{k-1}^{2}\xi_{k}^{3}-a_{31}b_3\xi_{k}^{1}\xi_{k}^{3}-a_{32}b_3\xi_{k}^{2}\xi_{k}^{3}+ \nonumber\\
&a_{32}a_{21}b_3\xi_{k-1}^{1}\xi_{k}^{2}\xi_{k}^{3}+a_{32}a_{21}b_3\xi_{k}^{1}\xi_{k}^{2}\xi_{k}^{3}, \label{P_3stage}\\
P_2(\xi)=&\  a_{21}b_2\xi_{k-1}^{1}\xi_k^{2}+a_{31}b_3\xi_{k-1}^{1}\xi_{k}^{3}+a_{32}b_3\xi_{k-1}^{2}\xi_{k}^{3}-a_{32}a_{21}b_3\xi_{k-2}^{1}\xi_{k-1}^{2}\xi_{k}^{3}- \nonumber\\
&a_{32}a_{21}b_3\xi_{k-1}^{1}\xi_{k-1}^{2}\xi_{k}^{3}-a_{32}a_{21}b_3\xi_{k-1}^{1}\xi_{k}^{2}\xi_{k}^{3}, \nonumber\\
P_3(\xi)= &\  a_{32}a_{21}b_3\xi_{k-2}^{1}\xi_{k-1}^{2}\xi_{k}^{3} \nonumber.
\end{align}
For readability we define
\begin{equation}\label{xyzuvw}
x:=\xi_{k-2}^{1},\ y:=\xi_{k-1}^{1},\ z:=\xi_k^{1},\ u:=\xi_{k-1}^{2},\ v:=\xi_k^{2},\ w:=\xi_k^{3}.
\end{equation}

\subsection{Case I}

In this subsection we focus on Case I, referred to as \textit{generic} ERK(3,3) methods. 
Their corresponding Butcher tableau with real parameters $\alpha$ and $\beta$ satisfying
\begin{equation}\label{ERK33alphabetarestriction}
\alpha, \beta\in \mathbb{R}\setminus\{0\} \quad \mathrm{and}\quad \frac{2}{3}\ne \alpha\neq\beta
\end{equation} 
is given by 
\begin{equation}\label{RK2parameterfamily}
A=\left(
\begin{array}{ccc}
 0 & 0 & 0 \\
 \alpha  & 0 & 0 \\
 \beta-\frac{(\alpha -\beta ) \beta }{\alpha  (3 \alpha -2)}  & \frac{(\alpha -\beta ) \beta }{\alpha  (3 \alpha -2)} & 0 \\
\end{array}
\right),\quad 
b^\top=\left(\frac{6 \alpha  \beta -3 \alpha -3 \beta +2}{6 \alpha  \beta },\frac{2-3 \beta }{6 \alpha  (\alpha -\beta )},\frac{3 \alpha -2}{6 \beta  (\alpha -\beta)}\right).
\end{equation}
By replacing $a_{ij}$ and $b_j$ in \eqref{P_3stage} with their corresponding parametrizations in \eqref{RK2parameterfamily}, 
we get
\begin{align*}
P_{0}(x,y,z,u,v,w)=\ &\frac{1}{6 \alpha  \beta  (\alpha -\beta )}\biggl[
-\alpha ^2 \beta  (v w z-3 w z+6 z-6)+\alpha  \beta ^2 (v w z-3 v z+6 z-6)+&\\
&\alpha  \beta 
   (v w+2 v z-3 w z)-\beta ^2 (w-3) (v-z)-2 \beta  (v-z)-3 \alpha ^2 (w-z)+2 \alpha 
   (w-z)\biggr],&\\
P_1(x,y,z,u,v,w)=\ &\frac{1}{6 \alpha  \beta  (\alpha -\beta )}\biggl[
\alpha ^2 \beta  (u w y+v w y+v w z-3 w y-3 w z+6 z)-&\\
&\alpha  \beta ^2 [u w y+v (w-3)
   (y+z)+6 z]-\alpha  \beta  (u w+v w+2 v y+2 v z-3 w y-3 w z)+&\\
&\beta ^2 [u w+v (w-3)-w
   y-w z+3 z]+2 \beta  (v-z)+3 \alpha ^2 (w-z)-2 \alpha  (w-z)\biggr],&\\
P_2(x,y,z,u,v,w)=\ &\frac{1}{6 \alpha  (\alpha -\beta )}\biggl[-\alpha ^2 w[u (x+y)+(v-3) y]+&\\
&\alpha  \beta  [u w (x+y)+v y (w-3)]+\alpha  (u w+2 v
   y-3 w y)+\beta  w (y-u)\bigg],&\\
P_3(x,y,z,u,v,w)=\ &\frac{1}{6}x u w.&
\end{align*}
To emphasize the dependence on the parameters, we will write $P_{i,\alpha,\beta}$ instead of
$P_{i}$. 

To characterize all methods in the generic ERK(3,3) family having the
maximum step-size coefficient $\gamma=1$,
we try and find (possibly all) pairs $(\alpha,\beta)\in\mathbb{R}^2$ satisfying \eqref{ERK33alphabetarestriction} such that for any $i=0,1, 2, 3$ we have
\begin{equation}\label{Pialphabetanonneg}
P_{i,\alpha,\beta}\ge 0 \quad \textrm{in } [0,1]^6.
\end{equation}
The polynomial $P_{3,\alpha,\beta}$ is clearly non-negative, so it is sufficient to deal with the indices
 $i=0,1, 2$.\\

\noindent \textbf{Remark.} \textit{Compared to Section \ref{sect:ERK(2,2)}, the computations are now much more involved. This explains why we will not attempt to compute the step-size coefficient $\gamma_{\alpha,\beta}$ for each generic ERK(3,3) method $(\alpha,\beta)$ (but cf. \eqref{gammaalphaforCaseII} and \eqref{gammaalphaforCaseIII}).}\\

First we formulate some necessary conditions on the parameters $(\alpha,\beta)$ for \eqref{Pialphabetanonneg} to hold. Observe that
\[
P_{2,\alpha,\beta}(0,0,1,1,1,1)=\frac{1}{6\alpha},
\]
implying $\alpha>0$. Then 
\[
P_{2,\alpha,\beta}(1,0,1,1,1,1)=\frac{1}{6} \left(\frac{1}{\alpha }-1\right)
\]
shows that $\alpha\le 1$ should also hold. Next we consider  
\[
P_{2,\alpha,\beta}(0,1,0,0,1,1)=\frac{2 \alpha -1}{6 \alpha },
\]
so $\alpha\ge 1/2$. Now we take into account that the non-negativity of 
\[
P_{2,\alpha,\beta}(1,1,1,1,1,0)=\frac{2-3 \beta }{6 (\alpha -\beta) } \quad \textrm{and}\quad
P_{2,\alpha,\beta}(1,1,0,1,0,1)=\frac{\alpha +2 \beta -2}{6 (\alpha -\beta )}
\]
together with \eqref{ERK33alphabetarestriction} and $\alpha\in[1/2,1]\setminus\{2/3\}$ imply that
\begin{equation}\label{bowtiedef}
(\alpha,\beta)\in {\cal{B}}:=\left\{ (\alpha,\beta)\in\mathbb{R}^2 : \left(\frac{1}{2}\leq \alpha <\frac{2}{3}\land \frac{2}{3}\leq \beta \leq 1-\frac{\alpha
   }{2}\right)\lor \left(\frac{2}{3}<\alpha \leq 1\land 1-\frac{\alpha }{2}\leq \beta
   \leq \frac{2}{3}\right)\right\}
\end{equation}
is necessary for \eqref{Pialphabetanonneg} to hold ($i=0,1,2$), see Figure \ref{bowtiefigure}. In fact, we have systematically evaluated the polynomials $P_{0,\alpha,\beta}$, $P_{1,\alpha,\beta}$ and $P_{2,\alpha,\beta}$ at each of the 64 vertices of the hypercube $[0,1]^6$ to choose the relevant polynomial and vertices presented above (see also Appendix \ref{appendixB}).

We now claim that \eqref{bowtiedef} is also sufficient for \eqref{Pialphabetanonneg}. 

\begin{conj}\label{conj1}
All methods \eqref{RK2parameterfamily} with $(\alpha,\beta)\in {\cal{B}}$ have $\gamma = 1$.
\end{conj}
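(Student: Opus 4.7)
The plan is to exploit the observation from Section \ref{sec:polycomp} that each $P_{i,\alpha,\beta}$ is a multilinear function of the six variables $x,y,z,u,v,w$ introduced in \eqref{xyzuvw}. A multilinear function is affine in each coordinate separately, so its minimum over the hypercube $[0,1]^6$ is attained at one of the $2^6 = 64$ vertices. Since $P_{3,\alpha,\beta}(x,y,z,u,v,w) = xuw/6 \ge 0$ automatically on $[0,1]^6$, and since Proposition \ref{corRphi} already guarantees $\gamma_{\alpha,\beta}\le 1$ for any ERK(3,3) method, Conjecture \ref{conj1} reduces to verifying that for every $(\alpha,\beta)\in\mathcal{B}$ and every vertex $\mathbf{v}\in\{0,1\}^6$ one has $P_{i,\alpha,\beta}(\mathbf{v})\ge 0$ for $i=0,1,2$.

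I would proceed by tabulating all $64\times 3 = 192$ vertex values as rational functions of $(\alpha,\beta)$ whose denominators lie in $\{6\alpha\beta(\alpha-\beta),\,6\alpha(\alpha-\beta),\,6\alpha\beta\}$. Most values collapse to $0$ or to a strictly positive constant, and the remaining binding constraints are exactly those that were already used to derive the necessary region \eqref{bowtiedef}. Because the sign of $\alpha-\beta$ differs on the two components of $\mathcal{B}$, I would split the verification into the two triangles: the left one where $1/2\le\alpha<2/3$ and $2/3\le\beta\le 1-\alpha/2$ (so $\alpha<\beta$), and the right one where $2/3<\alpha\le 1$ and $1-\alpha/2\le\beta\le 2/3$ (so $\alpha>\beta$). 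On each triangle every remaining vertex value has a denominator of constant sign, reducing the task to checking the sign of a low-degree polynomial numerator on a triangular region. Most such numerators either factor as products of the triangle-defining linear forms ($2\alpha-1$, $3\beta-2$, $2-\alpha-2\beta$, $1-\alpha$, and so on), or can be handled by evaluation at the three triangle vertices combined with convexity/concavity in $(\alpha,\beta)$.

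The principal obstacle is combinatorial rather than conceptual: one must enumerate all $192$ vertex values without omission and dispatch each nontrivial sign condition on both triangles. The boundary lines $\alpha=2/3$ and $\beta=2/3$ need separate treatment because they make individual denominators in \eqref{RK2parameterfamily} or in the vertex tabulation singular, even though each $P_{i,\alpha,\beta}$ extends continuously across them; a limiting argument or a reparametrization that clears denominators disposes of these edges. All of this bookkeeping is naturally automated using the \emph{Mathematica} routines described in Appendix \ref{sec:appendix}, and the same symbolic tabulation simultaneously exhibits a vertex at which some $P_{i,\alpha,\beta}$ becomes negative whenever $(\alpha,\beta)\notin\mathcal{B}$, certifying the sharpness of the region. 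A more elegant proof would exhibit for each $P_{i,\alpha,\beta}$ an explicit decomposition into monomials in $(x,y,z,u,v,w)$ with coefficients manifestly non-negative on $\mathcal{B}$, but the non-convex ``bowtie'' shape of $\mathcal{B}$ makes such a uniform decomposition unlikely, and I expect the vertex-enumeration approach to be the cleanest.
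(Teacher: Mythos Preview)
Your approach is sound and in fact goes further than the paper: the statement is left there as a \emph{conjecture}, supported only by sampling $\mathcal{B}$ on a finite grid $\mathcal{G}$ together with the vertex evaluations that carve out $\mathcal{B}$ as a necessary region. The authors explicitly write that ``proving the full conjecture would require a significant amount of work,'' and in the Discussion they remark that sufficiency of the vertex conditions ``can be proved by examining the critical points and the boundaries''---indicating that they did not exploit the elementary fact that a function affine in each variable attains its extrema on a box at the vertices. Your observation that each $P_{i,\alpha,\beta}$ is multilinear in $(x,y,z,u,v,w)$ (a fact already stated in Section~\ref{sec:polycomp}) and that this collapses nonnegativity on $[0,1]^6$ to nonnegativity at the $64$ vertices is precisely the missing step; combined with the paper's report that all $3\times 64$ vertex values were ``systematically evaluated'' and that the binding ones cut out exactly $\mathcal{B}$, your argument would upgrade the conjecture to a theorem, subject only to the mechanical tabulation you describe. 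One small correction: the lines $\alpha=2/3$ and $\alpha=\beta$ are already excluded from the Case~I domain by \eqref{ERK33alphabetarestriction}, and $\beta=2/3$ does not make any denominator in the $P_{i,\alpha,\beta}$ vanish, so no limiting argument is needed---the two components of $\mathcal{B}$ are distinguished solely by the sign of $\alpha-\beta$, exactly as you split the casework.
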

The conjecture is based on the following computations. We sampled the parameter set ${\cal{B}}$ at the grid points ${\cal{G}}\subset{\cal{B}}$ shown in Figure \ref{bowtiefigure}, and
verified that \eqref{Pialphabetanonneg} holds for each $i=0,1,2$
and $(\alpha,\beta)\in {\cal{G}}$. The \textit{Mathematica} commands {\texttt{Reduce}}
and {\texttt{FindInstance}} were key to formulating the above conjecture.
Proving the full conjecture would require a significant amount of work.

\begin{figure}
\begin{center}
\includegraphics[width=0.8\textwidth]{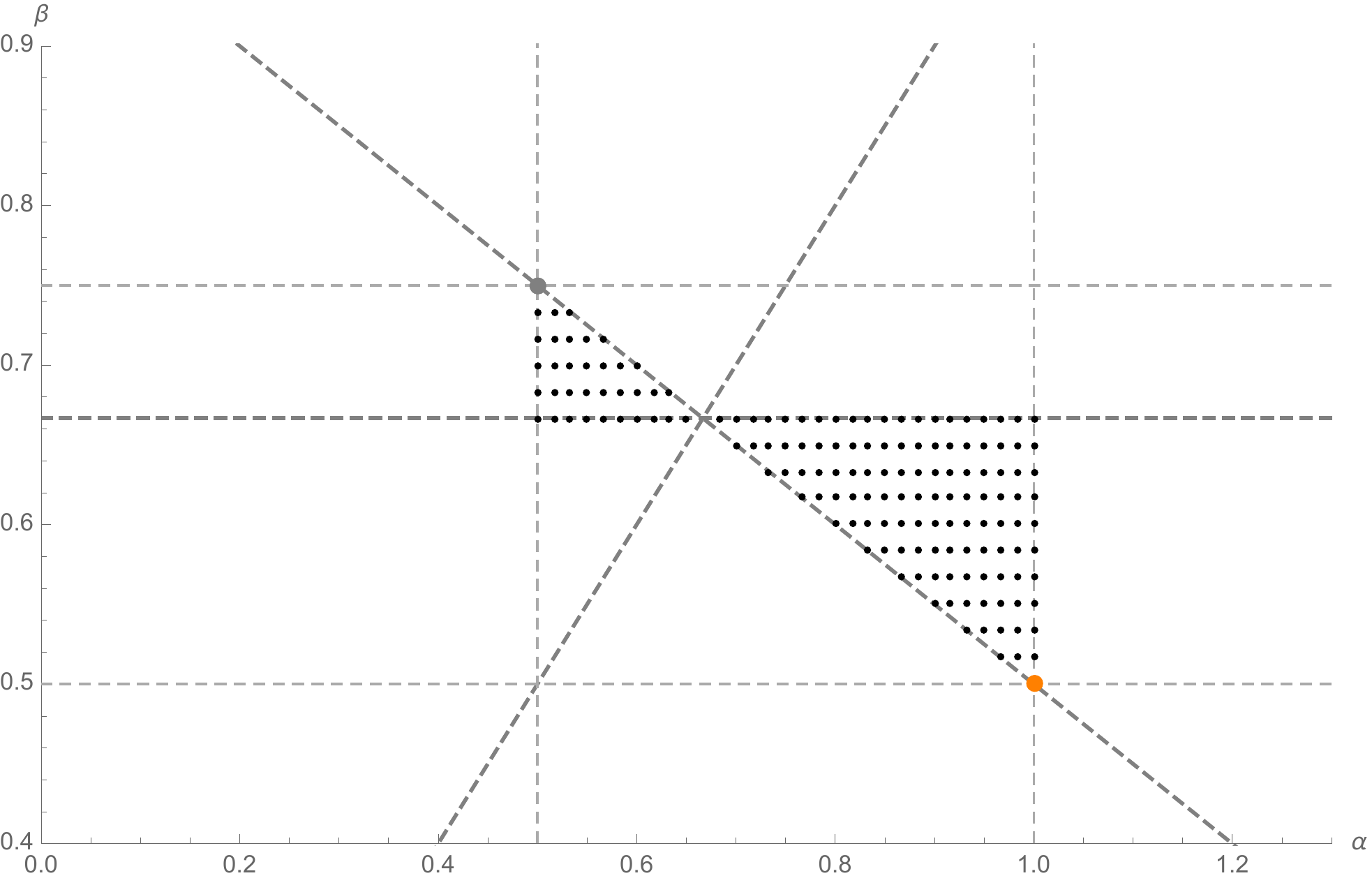}
\caption{The figure shows the bowtie-shaped region ${\cal{B}}$ (see \eqref{bowtiedef}) together
with the grid ${\cal{G}}\subset{\cal{B}}$ in the parameter plane. The line $\beta=\alpha$ is also
displayed (cf.~\eqref{ERK33alphabetarestriction}). 
The orange dot at $(\alpha,\beta)=(1,1/2)$ corresponds to the unique
optimal ERK(3,3) method with $\ssp=1$. The gray dot at $(\alpha,\beta)=(1/2,3/4)$ corresponds to the 
unique ERK(3,3) method
with minimum truncation-error coefficient.\label{bowtiefigure}}
\end{center}
\end{figure}

In particular, Conjecture \ref{conj1} means that  there are infinitely many
ERK(3,3) methods with step-size coefficient $\gamma=1$ in the present Case I (but see also Proposition \ref{prop5withproof}, where we are going to actually prove that there are infinitely many
ERK(3,3) methods with step-size coefficient $\gamma=1$ in Case II).  In contrast, as stated
earlier, there is only one ERK(3,3) method with SSP coefficient $\ssp=1$
(denoted by the orange dot in Figure \ref{bowtiefigure}). 


It is a nice coincidence that the method with $(\alpha,\beta)=(1/2, 3/4)$ (depicted as the gray vertex of ${\cal{B}}$ in Figure \ref{bowtiefigure}), that is, the one with tableau
\begin{equation}\label{methodwithmintrunc}
A=\left(
\begin{array}{ccc}
 0 & 0 & 0 \\
\frac{1}{2}  & 0 & 0 \\
0  & \frac{3}{4} & 0 \\
\end{array}
\right),\quad 
b^\top=\left(\frac{2}{9},\frac{1 }{3},\frac{4}{9}\right)
\end{equation}
is also an element of ${\cal{B}}$: in the full ERK(3,3) family (i.e., in Cases I-III) this is the 
unique method having the minimum truncation-error coefficient \cite[p.~433]{RA1962}.
Therefore, for problems of type \eqref{eq:ODEs}, the method \eqref{methodwithmintrunc} 
satisfies two optimality criteria simultaneously. The following proposition shows that this method indeed has the optimal step-size coefficient
$\gamma=1$.

\begin{prop} For each $i=0,1,2$ and all $(x,y,z,u,v,w)\in [0,1]^6$ we have
\[
P_{i,1/2,3/4}(x,y,z,u,v,w)\ge 0.
\]
\end{prop}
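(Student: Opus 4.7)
The plan is to specialize $(\alpha,\beta)=(1/2,3/4)$ in the parametrization \eqref{RK2parameterfamily}, yielding the Butcher coefficients $a_{21}=1/2$, $a_{31}=0$, $a_{32}=3/4$, $b^\top=(2/9,\,1/3,\,4/9)$, and substitute these into \eqref{P_3stage} to get explicit closed-form expressions for the four polynomials $P_{i,1/2,3/4}$. Each such polynomial is multilinear in $(x,y,z,u,v,w)$ (affine-linear in every variable separately), so its minimum over $[0,1]^6$ is necessarily attained at a vertex. Non-negativity on the cube is therefore equivalent to a finite check at the $2^6=64$ vertex points, which can always be carried out by hand or symbolically with the \emph{Mathematica} routines of Appendix \ref{sec:appendix} and serves as a reliable fallback.

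A more informative proof proceeds by decomposing each polynomial into a sum of products of manifestly non-negative factors. Two cases are essentially immediate: $P_{3,1/2,3/4} = xuw/6$ is non-negative by inspection, while $P_{0,1/2,3/4}$ depends only on $(z,v,w)$ and can be rewritten as
\[
P_{0,1/2,3/4} = 1 - \tfrac{2}{9}\,z - \tfrac{4}{9}\,w - \tfrac{1}{3}\,v\bigl(1-\tfrac{z}{2}\bigr)(1-w),
\]
where the three subtracted quantities are non-negative on $[0,1]^3$ and bounded above by $2/9$, $4/9$, and $1/3$, respectively, so their sum never exceeds $1$. For $P_{2,1/2,3/4}$ I would aim at the grouping
\[
P_{2,1/2,3/4} = \tfrac{1}{6}\,y v\,(1-w) + \tfrac{1}{6}\,u w\,(2-x-y),
\]
each factor of which is non-negative on $[0,1]^6$. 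For $P_{1,1/2,3/4}$ I would try a three-term decomposition
\[
P_{1,1/2,3/4} = \tfrac{2}{9}\,z + B(1-w) + w\bigl(\tfrac{4}{9}-A\bigr),
\]
with $A := \tfrac{1}{3}\,u\,(1-y/2) \in [0,1/3]$ and $B := \tfrac{1}{3}\,v\,\bigl(1-(y+z)/2\bigr) \ge 0$, so that $\tfrac{4}{9}-A \ge 1/9>0$ and each summand is non-negative on $[0,1]^6$.

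The main obstacle is discovering the right groupings for $P_1$ and $P_2$: their multilinear expansions have ten and six monomials of mixed signs, and the correct organization is not immediate, although splitting into $w$-independent and $w$-linear pieces is a natural first move. Once a candidate decomposition is proposed, verification is mechanical — expand and compare coefficients of every monomial — and the consistency identity $P_0+P_1+P_2+P_3 \equiv 1$ noted after Definition \ref{defn1} provides a useful sanity check against arithmetic slips.
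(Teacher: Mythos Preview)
Your proposal is correct and follows essentially the same approach as the paper: the decompositions you give for $P_0$, $P_1$, and $P_2$ are exactly those of the paper up to constant factors (the paper writes $18P_0$, $18P_1$, $6P_2$ and groups the same terms). Your additional observation that multilinearity reduces non-negativity on $[0,1]^6$ to a finite vertex check is a valid alternative route that the paper does not invoke, but the main argument coincides.
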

\begin{proof} For any $(x,y,z,u,v,w)\in [0,1]^6$ we have
\[
18 P_{0,1/2,3/4}(x,y,z,u,v,w)=18-8 w-4 z+3 v (1-w) (z-2)\ge 6+3 v (1-w) (z-2)\ge 6-6=0,
\]
\[
18 P_{1,1/2,3/4}(x,y,z,u,v,w)=3 v (1-w) (2-y-z)+4 z+w (8+3 u (y-2))\ge w (8+3 u (y-2))\ge 2w\ge 0,
\]
and
\[
6P_{2,1/2,3/4}(x,y,z,u,v,w)=u w (2-x-y)+y v (1-w)\ge 0.
\]
\end{proof}

\begin{rem}
The present approach does not fully explain results in \cite{hundsdorfer1995positive}
regarding 3rd-order methods.  Therein, the method with $\alpha=1/3, \beta=2/3$
was found to give good results, but investigation of this method using the
present technique yields that $\gamma=0$.
\end{rem}

\subsection{Case II}\label{sec:CaseII}

For any $\alpha\in \mathbb{R}\setminus\{0\}$, these ERK(3,3) methods are described by the tableau
\begin{equation}\label{RK2parameterfamilyCaseII}
A=\left(
\begin{array}{ccc}
 0 & 0 & 0 \\
 \frac{2}{3} & 0 & 0 \\
 \frac{2}{3}-\frac{1}{4 \alpha } & \frac{1}{4 \alpha } & 0 \\
\end{array}
\right),\quad 
b^\top=\left(\frac{1}{4},\frac{3}{4}-\alpha,\alpha \right).
\end{equation}
Regarding the SSP coefficient in this family, we have the following result (whose proof details are omitted here).
\begin{prop}
The SSP coefficient satisfies
\begin{equation}\label{sspforCaseII}
\sspa=
\begin{cases}
0,& 0\ne \alpha< \frac{3}{8}\\
\frac{8 \alpha -3}{2}, & \frac{3}{8}\le \alpha\le \frac{9}{16}\\
3-4\alpha, & \frac{9}{16}<\alpha\le \frac{3}{4}\\
0, & \alpha> \frac{3}{4},
\end{cases}
\end{equation}
hence the unique maximum of the SSP coefficient
occurs at $\ssp_{9/16}=3/4$.
\end{prop}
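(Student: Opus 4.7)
The plan is to invoke the classical Shu--Osher/Kraaijevanger characterization: $\sspa$ is the supremum of $r\geq 0$ for which there exist non-negative coefficients $\alpha_{ij},\beta_{ij}$ satisfying the row-sum conditions $\sum_{j<i}\alpha_{ij}=1$, the step-size inequalities $r\beta_{ij}\leq \alpha_{ij}$, and the consistency relations $a_{ij}=\beta_{ij}+\sum_{j<k<i}\alpha_{ik}a_{kj}$ (with the analogous relation using $b_j$ for the update row) reproducing \eqref{RK2parameterfamilyCaseII}. Because the Shu--Osher coefficients for distinct rows $i$ are independent free variables, the optimization decouples row-by-row, and $\sspa$ is the minimum over rows of the per-row maximum admissible $r$.

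The trivial regimes fall out immediately. The entries $a_{31}=\tfrac{2}{3}-\tfrac{1}{4\alpha}$, $a_{32}=\tfrac{1}{4\alpha}$, $b_2=\tfrac{3}{4}-\alpha$, $b_3=\alpha$ are simultaneously non-negative exactly for $\alpha\in[3/8,3/4]$; whenever one of them is strictly negative, the corresponding $\beta_{ij}$ is forced negative in every valid Shu--Osher decomposition by the consistency relations (no $\alpha_{ik}a_{kj}$ term is available to cancel the negativity, since the remaining Butcher entries in the sum are non-negative, or the sum is empty), giving $\sspa=0$ for $\alpha\notin[3/8,3/4]$.

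For $\alpha\in[3/8,3/4]$ I would compute the per-row bounds. Stage~2 forces $\alpha_{21}=1,\,\beta_{21}=\tfrac{2}{3}$, yielding $r\leq\tfrac{3}{2}$. For stage~3, $\beta_{32}=\tfrac{1}{4\alpha}$ is fixed and $\beta_{31}\geq 0$ restricts $\alpha_{32}\leq 1-\tfrac{3}{8\alpha}$; maximizing $\alpha_{32}$ in $r\leq 4\alpha\alpha_{32}$ yields $r\leq(8\alpha-3)/2$, attained at $\alpha_{32}=1-\tfrac{3}{8\alpha}$, $\beta_{31}=0$. For the update row, by an entirely analogous argument $\beta_{43}=\alpha$ is fixed and $\beta_{42}\geq 0$ restricts $\alpha_{43}\leq 4\alpha(\tfrac{3}{4}-\alpha)$; maximizing $\alpha_{43}$ in $r\leq\alpha_{43}/\alpha$ yields $r\leq 3-4\alpha$, attained at $\alpha_{43}=4\alpha(\tfrac{3}{4}-\alpha)$, $\alpha_{42}=\beta_{42}=0$. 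Combining the three bounds, $\sspa=\min\{3/2,(8\alpha-3)/2,3-4\alpha\}$ on $[3/8,3/4]$; the first bound is never active, and the remaining two branches intersect at $\alpha=9/16$ with common value $\tfrac{3}{4}$, yielding the piecewise formula \eqref{sspforCaseII} and identifying $\alpha=9/16$ as the unique maximizer with $\ssp_{9/16}=\tfrac{3}{4}$.

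The main technical obligation is to verify, at the proposed update-row optimum, that the residual constraints $\beta_{41}\geq 0$ and $r\leq\alpha_{41}/\beta_{41}$ are actually satisfied (so that the stage-4 optimization was legitimately unconstrained by these). Direct substitution gives $\beta_{41}=1-3\alpha+\tfrac{8}{3}\alpha^2$ and $\alpha_{41}=1-3\alpha+4\alpha^2$, both strictly positive on $[3/8,3/4]$ since each quadratic has negative discriminant and positive leading coefficient. The inequality $r(1-3\alpha+\tfrac{8}{3}\alpha^2)\leq 1-3\alpha+4\alpha^2$, evaluated for $r=\min\{(8\alpha-3)/2,\,3-4\alpha\}$, reduces to two cubic polynomial inequalities in $\alpha$---one on $[3/8,9/16]$ and one on $[9/16,3/4]$---each routinely checked at the endpoints of its sub-interval and shown to have no interior sign change. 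This is the lengthiest but conceptually least subtle step of the argument.
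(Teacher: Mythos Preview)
The paper explicitly omits the proof of this proposition (``whose proof details are omitted here''), so there is no argument to compare yours against. Your Shu--Osher row-by-row optimization is correct and is the standard route: the decoupling is valid because each row's consistency relations involve only that row's Shu--Osher parameters, and the per-row upper bounds $3/2$, $(8\alpha-3)/2$, $3-4\alpha$ you derive from rows $2,3,4$ are accurate, as is the argument that negativity of a Butcher entry forces the corresponding $\beta_{ij}<0$.

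The residual obligation you flag---that the row-$4$ constraint $r\beta_{41}\le\alpha_{41}$ at your chosen Shu--Osher point does not cut below the claimed minimum---is indeed routine. With $\beta_{41}=1-3\alpha+\tfrac{8}{3}\alpha^2$ and $\alpha_{41}=1-3\alpha+4\alpha^2$, on $[9/16,3/4]$ the inequality $(3-4\alpha)\beta_{41}\le\alpha_{41}$ rearranges (after multiplying by $3$) to $6-30\alpha+48\alpha^2-32\alpha^3\le 0$; this cubic is strictly decreasing (its derivative $-30+96\alpha-96\alpha^2$ has negative discriminant) and is already negative at $\alpha=9/16$. On $[3/8,9/16]$ the inequality $\tfrac{8\alpha-3}{2}\beta_{41}\le\alpha_{41}$ rearranges (after multiplying by $6$) to $15-69\alpha+120\alpha^2-64\alpha^3\ge 0$; this cubic is positive at both endpoints and at its unique interior critical point $\alpha=(5-\sqrt{2})/8$, hence positive throughout.
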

\begin{prop}\label{prop5withproof}
The step-size coefficient in this family is given by 
\begin{equation}\label{gammaalphaforCaseII}
\gamma_\alpha=
\begin{cases}
0,& 0\ne \alpha<\frac{3}{8}\\
2\alpha,& \frac{3}{8}\le \alpha<\frac{1}{2}\\
1, & \frac{1}{2}\le \alpha\le\frac{3}{4}\\
0, & \alpha>\frac{3}{4}.
\end{cases}
\end{equation}
\end{prop}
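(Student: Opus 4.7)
The plan is to substitute the Case~II Butcher coefficients $a_{21}=2/3$, $a_{31}=2/3-1/(4\alpha)$, $a_{32}=1/(4\alpha)$, $b_1=1/4$, $b_2=3/4-\alpha$, $b_3=\alpha$ into the general three-stage formulas \eqref{P_3stage}, rename variables via \eqref{xyzuvw}, and obtain explicit multilinear polynomials $P_{0,\alpha},P_{1,\alpha},P_{2,\alpha}$ in $(x,y,z,u,v,w)$, parametrized by $\alpha$. The polynomial $P_{3,\alpha}=xuw/6$ is automatically non-negative on the positive orthant, so the entire argument reduces to controlling the sign of $P_{0,\alpha},P_{1,\alpha},P_{2,\alpha}$ on a hypercube $[0,\delta]^6$.

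For the upper bounds, in each of the four $\alpha$-intervals I would exhibit a specific vertex, or small perturbation of a vertex, at which one of the three polynomials is negative. The bounds $\gamma_\alpha=0$ for $\alpha<0$, for $0<\alpha<3/8$, and for $\alpha>3/4$ follow respectively from the one-line identities $P_{1,\alpha}(0,0,0,0,0,w)=\alpha w$, $P_{2,\alpha}(0,y,0,0,0,w)=(2\alpha/3-1/4)yw$, and $P_{1,\alpha}(0,0,0,0,v,0)=(3/4-\alpha)v$, each of which is negative for suitable arbitrarily small positive entries. The bound $\gamma_\alpha\le 2\alpha$ for $\alpha\in[3/8,1/2)$ I would extract from the expansion $P_{2,\alpha}(2\alpha+\varepsilon,2\alpha,0,2\alpha+\varepsilon,0,2\alpha+\varepsilon)=(2\alpha+\varepsilon)\,\varepsilon\,(1/4-\alpha)+O(\varepsilon^2)$, which is negative for small $\varepsilon>0$ since $\alpha>1/4$. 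Finally, the bound $\gamma_\alpha\le 1$ for $\alpha\in[1/2,3/4]$ follows from the remarkably clean identity $P_{2,\alpha}(1,1,0,1,1+\varepsilon,1+\varepsilon)=-\varepsilon(1+\varepsilon)/6$, which is independent of $\alpha$.

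The main obstacle lies in the sufficiency direction, namely proving that $P_{0,\alpha},P_{1,\alpha},P_{2,\alpha}\ge 0$ on $[0,2\alpha]^6$ for $\alpha\in[3/8,1/2)$ and on $[0,1]^6$ for $\alpha\in[1/2,3/4]$. Since each $P_{i,\alpha}$ is multilinear in its six variables, its extrema over a coordinate-aligned box are attained at the $2^6=64$ vertices, so the problem reduces in principle to a finite family of at most $192$ polynomial inequalities in the scalar $\alpha$ per range (many trivially zero), each of which can be verified with \textit{Mathematica}'s \texttt{Reduce} in the spirit of Appendix~\ref{sec:appendix}. For a human-readable certificate I would mimic the decomposition used for the method \eqref{methodwithmintrunc}, writing $P_{2,\alpha}$ as a sum of brackets of the form $yw[c_1-\lambda u-\mu v]+uw[c_2-\nu x-\rho y]+yv[c_3-\sigma w]$, and tuning the split constants (in general $\alpha$-dependent) so every bracket is manifestly non-negative on the claimed hypercube. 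The delicate part is finding such a split uniformly in $\alpha$ within each subinterval, especially near the endpoints $\alpha\in\{3/8,1/2,3/4\}$ where the inequalities become tight and the split constants collapse to the boundary of feasibility; I expect the rescaled hypercube $[0,2\alpha]^6$ case to be slightly more technical than the fixed hypercube $[0,1]^6$ case for this reason.
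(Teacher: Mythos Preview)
Your proposal is correct. The upper-bound witnesses you give differ from the paper's (which uses only $P_{2,\alpha}$, evaluated at $(0,\varepsilon,0,0,0,\varepsilon)$, $(\varepsilon,\ldots,\varepsilon,0)$, $(2\alpha{+}\varepsilon,\ldots,2\alpha{+}\varepsilon,0,2\alpha{+}\varepsilon)$, and $(1{+}\varepsilon,\ldots,1{+}\varepsilon)$ for the four ranges respectively), but your choices are equally valid and in the same spirit.

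For sufficiency you take a genuinely different, and in fact more economical, route. The paper does \emph{not} invoke the fact that a multi-affine function attains its extrema on a box at the vertices; instead, for $\alpha\in[1/2,3/4]$ it writes out $12P_{0,\alpha}$, $12P_{1,\alpha}$, $12P_{2,\alpha}$ explicitly and proves each non-negative on $[0,1]^6$ by hand-crafted chains of elementary inequalities: grouping terms, replacing one variable at a time by its worst-case endpoint, introducing the auxiliary variables $\rho=2\alpha-1$ and $\sigma=y+z$, and case-splitting (for instance on the sign of $w-v$ in the $P_{2,\alpha}$ argument). The case $\alpha\in[3/8,1/2)$ is only declared ``analogous'' and not written out. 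Your vertex-checking observation collapses the whole sufficiency question to at most $3\times 64$ univariate polynomial inequalities in $\alpha$ per range, which is systematic and makes the $\alpha$-dependent hypercube $[0,2\alpha]^6$ no harder in principle (the vertex values are then at most cubic in $\alpha$). What the paper's approach buys is a fully self-contained, computer-free certificate; what yours buys is a uniform mechanical reduction. Your proposed fallback via bracket decomposition is exactly the style of argument the paper actually carries out, so you would converge with it there.
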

\begin{proof}
We use the polynomials and variables given by \eqref{P_3stage}-\eqref{xyzuvw} now with 
\eqref{RK2parameterfamilyCaseII}. 

For any fixed $0\ne \alpha<\frac{3}{8}$ and arbitrary 
$\varepsilon>0$ we have
\[
12 P_{2,\alpha}(0,\varepsilon,0, 0, 0, \varepsilon)=\varepsilon^2 (8\alpha-3)<0,
\]
showing that $\gamma_\alpha=0$ here. 

For any fixed $\alpha>\frac{3}{4}$ and arbitrary 
$\varepsilon>0$ we have
\[
6 P_{2,\alpha}(\varepsilon, \varepsilon,  \varepsilon,  \varepsilon, \varepsilon, 0)=\varepsilon^2 (3-4\alpha)<0,
\]
hence $\gamma_\alpha=0$ also holds for these values of $\alpha$.

For any fixed $\frac{3}{8}\le \alpha<\frac{1}{2}$ and arbitrary 
$\varepsilon>0$ we have
\[
3 P_{2,\alpha}(2\alpha+\varepsilon, 2\alpha+\varepsilon,  2\alpha+\varepsilon,  2\alpha+\varepsilon,
 0, 2\alpha+\varepsilon)=-\varepsilon (2\alpha+\varepsilon)^2<0,
\]
so $\gamma_\alpha\le 2\alpha$ for these values of $\alpha$.

For any fixed $\frac{1}{2}\le \alpha\le \frac{3}{4}$ and arbitrary 
$\varepsilon>0$ we have
\[
2 P_{2,\alpha}(1+\varepsilon, 1+\varepsilon,  1+\varepsilon,  1+\varepsilon,
 1+\varepsilon, 1+\varepsilon)=-\varepsilon (1+\varepsilon)^2<0,
\]
therefore $\gamma_\alpha\le 1$ for these values of $\alpha$.

Since $P_3(x,y,z,u,v,w)=\frac{1}{6}x u w\ge 0$ for any non-negative choice of the arguments, to finish the proof of the proposition, we need to establish that 
\[P_{0,\alpha}(x,y,z,u,v,w)\ge 0, \quad P_{1,\alpha}(x,y,z,u,v,w)\ge 0\quad \text{and}\quad P_{2,\alpha}(x,y,z,u,v,w)\ge 0\] 
hold
\begin{itemize}
\item for any $\alpha\in\left[ \frac{3}{8}, \frac{1}{2}\right)$ and $(x,y,z,u,v,w)\in [0,2\alpha]^6$, implying $\gamma_\alpha\ge 2\alpha$;
\item for any $\alpha\in\left[ \frac{1}{2}, \frac{3}{4}\right]$ and $(x,y,z,u,v,w)\in [0,1]^6$, implying $\gamma_\alpha\ge 1$.
\end{itemize}
Here we present the proof only for the case $\alpha\in\left[ \frac{1}{2}, \frac{3}{4}\right]$---yielding the maximum possible value of $\gamma_\alpha=1$ for infinitely many ERK(3,3) methods; the proof details in the other case (for $\alpha\in\left[ \frac{3}{8}, \frac{1}{2}\right)$) are analogous.

$\bullet$ \textbf{Non-negativity of $P_{0,\alpha}$.} We first show that $P_{0,\alpha}(x,y,z,u,v,w)\ge 0$ holds for any $\alpha\in\left[ \frac{1}{2}, \frac{3}{4}\right]$ and $(x,y,z,u,v,w)\in [0,1]^6$. Indeed,
\[
12 P_{0,\alpha}(x,y,z,u,v,w)=12 \alpha  v-2 v w z+3 v w-8 \alpha  v z+6 v z-9 v-12 \alpha  w+8 \alpha  w z-
3 w z-3z+12=
\]
\[
(12 \alpha -9) v+v w (3-2 z)+2 (3-4 \alpha ) v z-12 \alpha  w+8 \alpha  w z-3(w+1) z+12\ge
\]
\[
\left(12\cdot\frac{1}{2}-9\right)\cdot 1+0+0-12 \alpha  w+8 \alpha  w z-3(1+1)z+12=-12 \alpha  w+8 \alpha  w z-6 z+9=
\]
\[
(3-2 z) (3-4 \alpha  w)\ge 0.
\]

$\bullet$ \textbf{Non-negativity of $P_{1,\alpha}$.} Next we show that $P_{1,\alpha}(x,y,z,u,v,w)\ge 0$ holds for any $\alpha\in\left[ \frac{1}{2}, \frac{3}{4}\right]$ and $(x,y,z,u,v,w)\in [0,1]^6$. This time we have
\[
12 P_{1,\alpha}(x,y,z,u,v,w)=
\big[-12 \alpha  v+8\alpha  v y-6 v y+8 \alpha  v z-6 v z+9 v+3 z\big]+
\]
\[
w \big[12 \alpha +2 u y-3 u+2 v y+2 v z-3 v-8 \alpha  y+3 y-8 \alpha  z+3 z\big].
\]
We prove that both pairs of brackets $[\ldots]$ above contain non-negative quantities.

As for the first pair of brackets, notice that
\[
-12 \alpha  v+8\alpha  v y-6 v y+8 \alpha  v z-6 v z+9 v+3 z=-12 \alpha  v+8 \alpha  v y-6 v y+8 \alpha  v z-3 v z+3 (1-v) z+9 v\ge
\]
\[
-12 \alpha  v+8 \alpha  v y-6 v y+8 \alpha  v z-3 v z+0+9 v=v (-12 \alpha +(8 \alpha -6) y+(8 \alpha-3) z+9)\ge
\]
\[
v (-12 \alpha +(8 \alpha -6) y+0+9)=v(3-4 \alpha ) (3-2 y)\ge 0.
\]

As for the second pair of brackets, we have
\[
12 \alpha +2 u y-3 u+2 v y+2 v z-3 v-8 \alpha  y+3 y-8 \alpha  z+3 z\ge
\]
\[
 12 \alpha +0\cdot u y-3\cdot 1+2 v y+2 v z-3 v-8 \alpha  y+3 y-8 \alpha  z+3 z.
\]
Now, by introducing $\rho:=2\alpha-1\in\left[0,\frac{1}{2}\right]$ and $\sigma:=y+z\in[0,2]=\left[0,\frac{3}{2}\right)\cup \left[\frac{3}{2},2\right]$, we get that the last expression above is equal to
\[
3-\sigma +(2 \sigma -3) (v-2 \rho ).
\]
For $\sigma\in\left[0,\frac{3}{2}\right)$ we are done because
\[
3-\sigma + (2 \sigma -3)(v-2 \rho )=3-\sigma + (2 \rho -v)(3-2 \sigma )\ge 3- \sigma+(0 -1)(3-2 \sigma )
   =\sigma \ge 0.
\]
For $\sigma\in\left[\frac{3}{2},2\right]$ we are also done because
\[
3-\sigma +(2 \sigma -3) (v-2 \rho )\ge 3-\sigma +(2 \sigma -3) (v-1)\ge 
\]
\[
1+(2 \sigma-3) (v-1)\ge 1+(2\cdot 2 -3)(v-1)=v\ge 0.
\]

$\bullet$ \textbf{Non-negativity of $P_{2,\alpha}$.} Finally we show that $P_{2,\alpha}(x,y,z,u,v,w)\ge 0$ holds for any $\alpha\in\left[ \frac{1}{2}, \frac{3}{4}\right]$ and $(x,y,z,u,v,w)\in [0,1]^6$. We now have
\[
12 P_{2,\alpha}(x,y,z,u,v,w)= (3-2  x)u w-2 u w y-2 v w y-8 \alpha  v y+6 v y+8 \alpha  w y-3 w y\ge
\]
\[
1\cdot  u w-2 u w y-2 v w y-8 \alpha  v y+6 v y+8 \alpha  w y-3 w y=\]
 \[
u w(1-y) -u w y-2 v w y-8\alpha  v y+6 v y+8 \alpha  w y-3 w y\ge
   \]
   \[
  0  -u w y-2 v w y-8 \alpha  v y+6 v y+8 \alpha  w y-3 w y=
   \]
   \[
   y(-u w-2 v w-8 \alpha  v+6 v+8 \alpha  w-3 w)\ge 
   y(-1\cdot w-2v w-8 \alpha  v+6 v+8 \alpha  w-3w)=
   \]
   \[
   y(-8 \alpha  v-2 v w+6 v+8 \alpha  w-4 w).
\]
Clearly, to finish the proof, it is enough to show that
\[
-8 \alpha  v-2 v w+6 v+8 \alpha  w-4 w\ge 0.
\]
By introducing $\rho:=2\alpha-1\in\left[0,\frac{1}{2}\right]$, the left-hand side of this last expression above becomes
\[
-4 (\rho +1) v-2 v w+6 v+4 (\rho +1) w-4 w.
\]
If $w-v\ge0$, then we are done, since
\[
-4 (\rho +1) v-2 v w+6 v+4 (\rho +1) w-4 w=4 \rho  (w-v)+2 v (1-w)\ge 0.
\]
If $w-v<0$, then we are also done, since
\[
-4 (\rho +1) v-2 v w+6 v+4 (\rho +1) w-4 w=4 \rho  (w-v)+2 v (1-w)\ge
\]
\[
4\cdot\frac{1}{2}\cdot (w-v)+ 2 v (1-w)=2 (1-v) w\ge 0.
\]
\end{proof}

Figure \ref{CvsGamma_CaseII} displays the coefficients $\sspa$ and $\gamma_\alpha$.

\begin{figure}
\begin{center}
\includegraphics[width=0.5\textwidth]{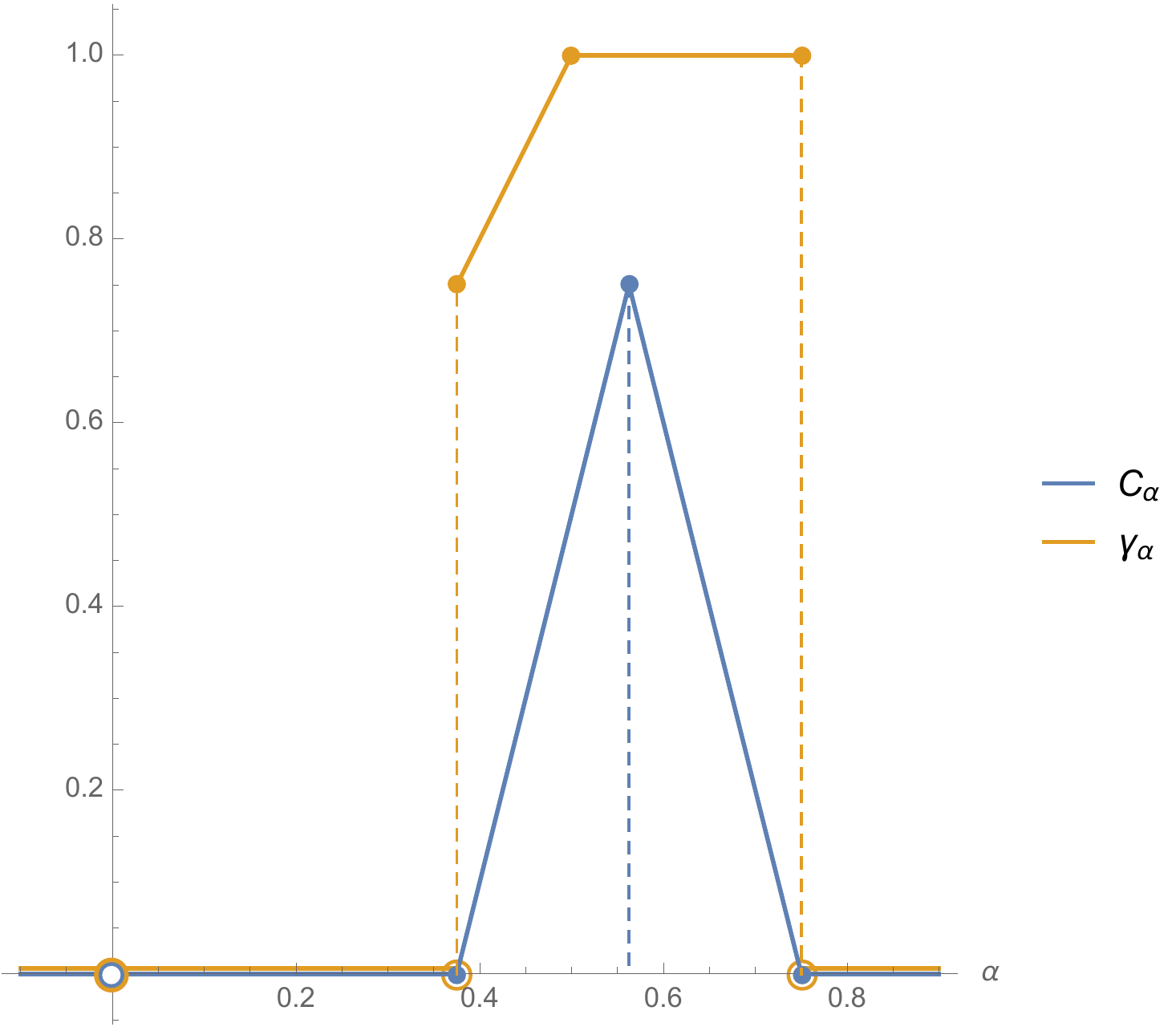}
\caption{The SSP coefficient \eqref{sspforCaseII} and the step-size coefficient 
\eqref{gammaalphaforCaseII} in Case II\label{CvsGamma_CaseII}}
\end{center}
\end{figure}

\subsection{Case III}\label{sec:CaseIII}

For any $\alpha\in \mathbb{R}\setminus\{0\}$, the ERK(3,3) methods within this family have the tableau
\begin{equation}\label{RK2parameterfamilyCaseIII}
A=\left(
\begin{array}{ccc}
 0 & 0 & 0 \\
 \frac{2}{3}  & 0 & 0 \\
  -\frac{1}{4\alpha}  & \frac{1}{4\alpha}  & 0 \\
\end{array}
\right),\quad 
b^\top=\left(\frac{1}{4}-\alpha,\frac{3}{4},\alpha \right).
\end{equation}
It can be proved that their SSP coefficient is trivial, that is,
\begin{equation}\label{sspforCaseIII}
\sspa=0 \textrm{ for any } \alpha\ne 0. 
\end{equation}
As for the step-size coefficient, by using the 
 polynomials and variables given in \eqref{P_3stage}-\eqref{xyzuvw} with \eqref{RK2parameterfamilyCaseIII}, we get for any $\varepsilon>0$ that
\[
4 P_{2,\alpha}(0,\varepsilon,0,0,0,\varepsilon)=-\varepsilon^2<0,
\]
implying 
\begin{equation}\label{gammaalphaforCaseIII}
\gamma_\alpha=0 \textrm{ for any } \alpha\ne 0.
\end{equation}

\section{Step-size coefficients for higher-order methods}\label{sect:ERKhigher}

In this section we investigate the non-negativity of higher-order ERK methods and show some negative results.

It is known \cite[Section 2.4.3]{David11} that the SSP coefficient for any ERK(4,4) method is $\ssp=0$.
The question naturally arises whether it is possible to find an ERK(4,4) method with positive
step-size coefficient.

First we recall the following result; see the proof of \cite[Theorem 9.6]{K91},\footnote{We thank Zolt\'an Horv\'ath (Sz\'echenyi Istv\'an University, Hungary) for pointing this out.} and of \cite[Theorem 2.4]{David11}.
\begin{prop}\label{nonnegERK44uniqueness}
A 4-stage 4th-order explicit Runge--Kutta method has non-negative Butcher tableau $(A, b)$ if and only if the method is the classical RK4 method (appearing in Appendix \ref{appendixA}).
\end{prop}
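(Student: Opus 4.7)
The ``if'' direction is immediate: every entry of the classical RK4 Butcher tableau given in Appendix \ref{appendixA} is non-negative. For the ``only if'' direction, the plan is to combine the eight order conditions for a 4-stage 4th-order ERK method with the hypothesis $A\ge 0$ and $b\ge 0$ (componentwise) and show that together they force the tableau to be the classical one.

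First I would write out the order conditions in Butcher form: $\sum_i b_i c_i^{k-1}=1/k$ for $k=1,2,3,4$, together with $\sum_{i,j} b_i a_{ij} c_j=1/6$, $\sum_{i,j} b_i c_i a_{ij} c_j=1/8$, $\sum_{i,j} b_i a_{ij} c_j^2=1/12$, and $\sum_{i,j,k} b_i a_{ij} a_{jk} c_k=1/24$, where $c_i:=\sum_j a_{ij}$ and $a_{ij}=0$ for $j\ge i$ (so $c_1=0$). Non-negativity gives $c_i\ge 0$, and the last order condition $b_4 a_{43} a_{32} c_2=1/24$ forces $b_4$, $a_{43}$, $a_{32}$ and $a_{21}$ to be strictly positive. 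Suitable linear combinations of the order conditions yield Butcher-type identities such as
\[
\sum_{i=1}^{4} b_i(1-c_i)\Bigl(c_i^2 - 2\sum_j a_{ij} c_j\Bigr) = \tfrac{1}{12} - \tfrac{2}{24} = 0,
\]
together with variants obtained by inserting extra factors of $c_i$ or $(1-c_i)$. Once one localises the $c_i$ in $[0,1]$, each such identity becomes a sum of sign-constrained terms, forcing individual summands to vanish.

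The structural steps I would then carry out are: (i) establish the Butcher simplifying assumption $D(1)$, i.e., $\sum_i b_i a_{ij} = b_j(1-c_j)$ for all $j$, handling any degenerate sub-case separately; from $j=4$ with $b_4>0$ this immediately gives $c_4=1$, and more generally $b_3(1-c_3)=b_4 a_{43}\ge 0$; (ii) use non-negativity to show $c_2,c_3\in(0,1)$ strictly, and pin down $c_2=c_3=1/2$ from the remaining polynomial constraints; (iii) backsubstitute to obtain $a_{21}=a_{32}=1/2$, $a_{41}=a_{42}=0$, $a_{43}=1$, and $b=(1/6,1/3,1/3,1/6)$. The main obstacle is the branching in the case analysis: with ten free parameters and eight nonlinear equations, one must eliminate several degenerate branches (for instance, candidates with $c_2=c_3\ne 1/2$, or with some $b_i=0$) by invoking a sufficiently rich family of sign-definite identities. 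A clean alternative route is to quote the classical Butcher classification of all 4-stage 4th-order ERK methods---Kutta's two-parameter generic family together with its exceptional sub-families---and then inspect each branch to check which admit a non-negative tableau; under $A,b\ge 0$ only the classical RK4 survives.
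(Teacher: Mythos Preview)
The paper does not actually prove this proposition: it simply \emph{recalls} the result and refers the reader to the proofs in \cite[Theorem~9.6]{K91} and \cite[Theorem~2.4]{David11}. So there is no ``paper's own proof'' to compare against, only the cited literature.

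Your proposal, by contrast, attempts a self-contained argument. The overall strategy is sound: write out the eight order conditions, use the tall-tree condition $b_4 a_{43} a_{32} c_2 = 1/24$ to force strict positivity of those four entries, establish $D(1)$ and hence $c_4=1$, then localise $c_2,c_3$ and close the system. The vanishing combination you display is correct (both sides are zero), but note that the factor $c_i^2 - 2\sum_j a_{ij} c_j$ is \emph{not} sign-definite merely from $A\ge 0$ and $c_i\in[0,1]$, so this particular identity does not by itself force individual terms to vanish; you would need additional relations (e.g.\ the $C(2)$-type defects) to make the sign argument bite. You acknowledge this implicitly by calling the case analysis ``the main obstacle.''

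Your alternative route---invoke Kutta's explicit two-parameter classification of all ERK(4,4) methods and check branch by branch which tableaux can be non-negative---is essentially what the cited proofs do, and is considerably cleaner than grinding through the order conditions and sign identities directly. If you want a self-contained write-up, I would recommend that route over the first one.
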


Proposition \ref{nonnegERK44uniqueness} and Theorem \ref{nonconfluenttheorem} yield the following.
\begin{prop}
Any non-confluent ERK(4,4) method has step-size coefficient $\gamma=0$.
\end{prop}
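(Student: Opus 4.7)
The plan is to combine Proposition \ref{nonnegERK44uniqueness} with Theorem \ref{nonconfluenttheorem}, after eliminating the single exceptional case allowed by Proposition \ref{nonnegERK44uniqueness}. First I would look at the classical RK4 method of Appendix \ref{appendixA} and observe that its abscissae satisfy $c_2 = c_3 = 1/2$; hence the classical RK4 is \emph{confluent}. Consequently, any non-confluent ERK(4,4) method is necessarily different from the classical RK4, so by the ``only if'' direction of Proposition \ref{nonnegERK44uniqueness} its Butcher tableau $(A,b)$ must contain at least one strictly negative entry.

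Next, I would argue that any ERK(4,4) method is automatically DJ-irreducible. If a given 4-stage, 4th-order explicit method were DJ-reducible, it would be equivalent to a Runge--Kutta method with strictly fewer stages, retaining the order of accuracy of the original. But this equivalent method would then be an explicit scheme with at most 3 stages of order 4, contradicting the classical Butcher barrier (no 3-stage explicit RK method attains order $4$). Hence the irreducibility hypothesis of Theorem \ref{nonconfluenttheorem} is free in this setting.

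With non-confluence assumed, DJ-irreducibility established, and the existence of a negative Butcher entry guaranteed by the first paragraph, all three hypotheses of Theorem \ref{nonconfluenttheorem} are satisfied. Applying that theorem immediately yields $\gamma(A,b) = 0$, which is the desired conclusion.

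The main subtlety in this sketch is the DJ-reducibility step: one must invoke that a DJ-reduction produces a genuine RK method of the same order with strictly fewer stages, and then appeal to the order barrier for explicit methods. Both statements are classical (and cited, e.g., in \cite{dahlquist2006} and \cite{Butcher08}), so no new computation is needed; the proof is essentially a two-line concatenation of Proposition \ref{nonnegERK44uniqueness} and Theorem \ref{nonconfluenttheorem}, with the observation about the confluence of classical RK4 bridging the two.
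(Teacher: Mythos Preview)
Your proof is correct and follows exactly the route the paper takes: the paper's proof is the single sentence ``Proposition \ref{nonnegERK44uniqueness} and Theorem \ref{nonconfluenttheorem} yield the following,'' and you have simply unpacked that sentence, supplying the two bridging observations (classical RK4 is confluent since $c_2=c_3=1/2$, and DJ-irreducibility follows from the Butcher barrier) that the paper leaves implicit.
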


The previous result cannot be applied to the classical ERK(4,4) method, for example, which is confluent. For this method we have the following construction. 
\begin{prop}\footnote{Our Proposition 
    \ref{classicalERK44negativity} seems to directly contradict Theorem 1 in \cite{K15}.  To explain the discrepancy, note that the polynomial $P_3$ in our proof becomes negative along a 9-dimensional hyperface of the hypercube $[0,\varepsilon]^{10}$ for any 
 $\varepsilon>0$; in \cite{K15} it seems that the non-negativity of the corresponding (but slightly different) polynomial was checked only at the vertices of the hypercube $[0,1]^{10}$.}\label{classicalERK44negativity}
    There is no positive step-size coefficient $\gamma$ such that the
    classical ERK(4,4) method preserves positivity for all problems of the form
    \eqref{eq:ODEs}.
\end{prop}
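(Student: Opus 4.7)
The plan is to compute the multivariable polynomial $P_3$ from the representation \eqref{eq:poly} for the classical ERK(4,4) method, and then to exhibit a single evaluation in the cube $[0,\varepsilon]^{10}$ at which $P_3$ is strictly negative, for every $\varepsilon>0$. Once such a point is produced, Definition \ref{defn1} directly gives $\gamma(A,b)=0$, which is exactly the assertion of the proposition; no separate construction of a bad problem of the form \eqref{eq:ODEs} is needed, although the construction from the proof of Theorem \ref{nonconfluenttheorem} could be adapted for illustrative purposes (note that the confluence of RK4 makes that adaptation slightly more delicate than in the non-confluent setting, but since our definition of $\gamma$ quantifies only over $\xi$, this is not required).

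The first step is to expand the four Runge--Kutta stages using the nonzero Butcher entries $a_{21}=a_{32}=1/2$, $a_{43}=1$ and $b=(1/6,1/3,1/3,1/6)^{\top}$, and to track the coefficient of $u^n_{k-3}$ through the eight quantities $y^i_{k-1}, y^i_k$ for $i=1,\dots,4$. The stages $y^1$ and $y^2$ contribute nothing because $u^n_{k-3}$ has not yet entered that shallow a stencil; stage $y^3$ contributes a single cubic positive monomial $\tfrac{1}{12}\xi^3_k\xi^2_{k-1}\xi^1_{k-2}$ via the chain $u^n_{k-3}\to y^2_{k-2}\to y^3_{k-1}\to u^{n+1}_k$; and stage $y^4$ adds a cubic positive term $\tfrac{1}{12}\xi^4_k\xi^3_{k-1}\xi^2_{k-2}$ together with four negative quartic monomials, among them the crucial ``cross'' term $-\tfrac{1}{24}\xi^4_k\xi^3_{k-1}\xi^2_{k-1}\xi^1_{k-2}$ that mixes the stage-3 branch with the stage-4 branch.

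The second step is to choose the substitution that isolates exactly this cross term. Setting the two bridge variables $\xi^3_k=0$ and $\xi^2_{k-2}=0$ annihilates both positive monomials of $P_3$, because the stage-3 cubic carries $\xi^3_k$ and the stage-4 cubic carries $\xi^2_{k-2}$; setting in addition $\xi^1_{k-3}=0$ and all remaining stage-1 and stage-2 coordinates to $0$ kills three of the four negative quartic monomials. Taking the four surviving coordinates
\[
\xi^4_k=\xi^3_{k-1}=\xi^2_{k-1}=\xi^1_{k-2}=\varepsilon
\]
then leaves $P_3=-\varepsilon^4/24<0$ at a point of $[0,\varepsilon]^{10}$, for every $\varepsilon>0$; in fact $P_3$ is strictly negative on the entire open region of the nine-dimensional hyperface $\{\xi^3_k=0\}$ where the four activated coordinates are positive and the two remaining stage-2-and-earlier variables $\xi^2_{k-2}, \xi^1_{k-3}$ are sufficiently small compared with $\xi^2_{k-1}\xi^1_{k-2}/\xi^2_{k-2}$, explaining the parenthetical remark in the footnote about the nine-dimensional hyperface.

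The main obstacle is purely bookkeeping. Because RK4 couples nontrivially at stages 3 and 4, one must carefully propagate $u^n_{k-3}$ through both chains $y^2_{k-2}\to y^3_{k-1}\to y^4_k$ and $y^2_{k-3}\to y^3_{k-2}\to y^4_{k-1}$, keeping track of the signs of the differences $y^i_{k-1}-y^i_k$ and of the factors of $1/2$ from $a_{21}, a_{32}$. Once the explicit form of $P_3$ is written down, the only creative step is recognizing that the cross term $-\tfrac{1}{24}\xi^4_k\xi^3_{k-1}\xi^2_{k-1}\xi^1_{k-2}$ has a support whose complement within the variable set contains both positive monomials of $P_3$, so zeroing out that complement produces a negative evaluation.
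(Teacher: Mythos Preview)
Your computation of $P_3$ and the negative evaluation at $\xi^1_{k-2}=\xi^2_{k-1}=\xi^3_{k-1}=\xi^4_k=\varepsilon$ (all other coordinates zero) is correct and coincides with the paper's choice, yielding $P_3=-\varepsilon^4/24$.

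There is, however, a genuine gap in your argument. You assert that ``Definition~\ref{defn1} directly gives $\gamma(A,b)=0$, which is exactly the assertion of the proposition.'' But the proposition does not merely claim $\gamma(A,b)=0$ in the sense of Definition~\ref{defn1}; it claims that no positive step size preserves positivity \emph{for all problems of the form \eqref{eq:ODEs}}. Definition~\ref{defn1} treats the $\xi^j_\ell$ as free variables in a hypercube, whereas in an actual problem they are constrained by $\xi^j_k=\frac{\Delta t}{\Delta x}q_k(y^j,t_n+c_j\Delta t)$. For non-confluent methods, Corollary~1 closes this gap by realizing any prescribed $\xi$ via a time-dependent $q$. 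The classical RK4 method, as you yourself note, is confluent ($c_2=c_3=1/2$), so that route is unavailable: a purely time-dependent $q$ would force $\xi^2_\ell=\xi^3_\ell$ for every $\ell$, and your chosen point has $\xi^2_{k-1}=\varepsilon\neq 0=\xi^3_{k-1}$ \ldots\ wait, actually $\xi^3_{k-1}=\varepsilon$ too, but $\xi^2_{k-2}=0$ while $\xi^3_{k-2}$ does not appear; more to the point, $\xi^2_k=0\neq\xi^3_k$ is not forced here either since both are zero. The real issue is that one must exhibit \emph{some} admissible $q_k(u,t)$ producing these $\xi$ values when evaluated along the actual stage trajectory.

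The paper therefore does not stop at the polynomial evaluation: it constructs an explicit problem with $N=4$, initial data $(1,0,0,0)$, and \emph{state-dependent} functions $q_k(u)$ (defined stage by stage on the successive $y^j$) so that the computed $\xi$ values are exactly those at which $P_3<0$, and verifies directly that $u^1_4=-\varepsilon^4/24$. This construction is the substantive content that your proposal omits; the adaptation of Theorem~\ref{nonconfluenttheorem} you allude to is not merely ``illustrative'' but is required to pass from $\gamma(A,b)=0$ to the statement actually being proved.
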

\begin{proof}
    We prove the proposition by constructing an ODE system \eqref{eq:ODEs} such that the
    method gives a negative solution value for any positive step size.

First we determine (for example, by using the code in Appendix \ref{appendixA}) that the polynomial $P_3$ 
for this method appearing in \eqref{eq:poly}  is 
\[
P_3(x_1,x_2,x_3,x_4,x_5,x_6,x_7,x_8,x_9,x_{10})=
\]
\[
\frac{1}{24}\left(2x_2 x_6 x_9+2 x_5 x_8 x_{10}-x_1 x_5 x_8 x_{10}- x_2 x_5 x_8 x_{10}- x_2 x_6 x_8 x_{10}-x_2 x_6 x_{9} x_{10}\right),
\]
where, for better readability, we have relabeled the variables according to
\[
(x_1, x_2, \ldots, x_{10}):=(\xi^1_{k-3}, \xi^1_{k-2}, \xi^1_{k-1}, \xi^1_{k},
\xi^2_{k-2}, \xi^2_{k-1}, \xi^2_{k},\xi^3_{k-1}, \xi^3_{k},\xi^4_{k}).
\]
We observe that this polynomial is negative in any hypercube $[0,\varepsilon]^{10}$ with 
$\varepsilon>0$, since 
\[
P_3(0,\varepsilon,0,0,0,\varepsilon,0,\varepsilon,0,\varepsilon)=-\varepsilon^4/24<0.
\]
  
  We now let $N=4$, $\varepsilon = \Delta t/\Delta x$, and as initial data take $v := u(t_0)=(1,0,0,0)$.
  Then  $P_3(\xi) = -\varepsilon^4/24$ if
    $\xi_{k-2}^1 = \xi_{k-1}^2 = \xi_{k-1}^3 = \xi_k^4 = \varepsilon$
    and the remaining values of $\xi$ are zero.  Thus it remains only
    to choose a definition of the functions $q_k$ such that the
    aforementioned $\xi$ values result.

    We can write the first step of the method as
    \begin{align*}
        y^1   & = v \\
        y^2_k & = v_k + \frac{\varepsilon}{2} q_k(y^1) (y^1_{k-1}-y^1_k) \\
        y^3_k & = v_k + \frac{\varepsilon}{2} q_k(y^2) (y^2_{k-1}-y^2_k) \\
        y^4_k & = v_k + \varepsilon q_k(y^3) (y^3_{k-1}-y^3_k) \\
        u^1_k & = v_k + \frac{\varepsilon}{6}\left( 
                                     q_k(y^1)(y^1_{k-1}-y^1_k)
                                  + 2q_k(y^2)(y^2_{k-1}-y^2_k)
                                  + 2q_k(y^3)(y^3_{k-1}-y^3_k)
                                  +  q_k(y^4)(y^4_{k-1}-y^4_k)
                                        \right).
    \end{align*}
We have $y^1 = u(t_0) = (1,0,0,0)$, and we define $q_2(y^1)=1$, while
    $q_1(y^1)=q_3(y^1)=q_4(y^1)=0$.  
    This leads to $y^2=(1,\varepsilon/2,0,0)$, and we define $q_3(y^2)=1$, while
    $q_1(y^2)=q_2(y^2)=q_4(y^2)=0$.
    This leads to $y^3=(1,0,\varepsilon^2/4,0)$, and we define $q_3(y^3)=1$, while
    $q_1(y^3)=q_2(y^3)=q_4(y^3)=0$.
    This leads to $y^4=(1,0,-\varepsilon^3/4,0)$, and we define $q_4(y^4)=1$,
    while $q_1(y^4)=q_2(y^4)=q_3(y^4)=0$.
    This leads to $u^1 = (1, \varepsilon/6, (2\varepsilon^2-\varepsilon^3)/12, -\varepsilon^4/24)$.
    The last entry is negative for any positive step size, and the proof is complete.
\end{proof}

Some well-known ERK methods of higher order or with more stages---e.g., the
methods of Fehlberg \cite{fehlberg1969klassische} or 
Dormand-Prince \cite{Dormand1980}---contain negative entries in their Butcher tableau, so
Theorem \ref{nonconfluenttheorem} shows that their step-size coefficient is 0. 
Nevertheless, we have constructed some ERK(5,7) methods with non-negative tableau (see \cite{RS02} also).  For these methods however the approach described in Section \ref{sect:suffcond} becomes practically unmanageable with our current tools: one would need to test the non-negativity of multivariable polynomials with several hundred terms in high-dimensional hypercubes.

\section{Discussion and further applications}\label{sec:discussion}
The method described and applied herein can be employed to study positivity
and related properties for any semi-discretization that can be written in the
form
\begin{align*}
    u'_k(t) & = q_k(u(t),t) \sum_{j=-r}^r c_j u_{k-j}(t),
\end{align*}
where $q_k \ge 0$.
Given this form, we can determine polynomials such that
\begin{align} \label{eq:polysym}
    u_{k}^{n+1} & = \sum_{i=-rm}^{rm} P_i(\xi) u_{k-i}^n,
\end{align}
following the approach in Section \ref{sec:polycomp};
only the structure of the matrix $D$ changes in
\eqref{eq:Y}.  

The computationally expensive step in the analysis is the
determination of a hypercube---in the non-negative orthant, and preferably having the maximum edge length---in which the polynomials $P_i$ (depending on parameters if we are to optimize in parametric families
of ERK methods) are simultaneously non-negative.
The limiting factor is the dimension of the hypercubes, as it grows quadratically with the number
of stages of the method.  With current tools, methods with more than
five stages cannot easily be studied in this manner.  
Nevertheless, the necessary condition for non-negativity obtained by evaluating the polynomials $P_i$ at the hypercube vertices often turns out to be sufficient as well (sufficiency can be proved by 
examining the critical points and the boundaries).
Moreover, Theorem
\ref{nonconfluenttheorem} immediately provides negative conclusions regarding
many high-order methods.

It is worthwhile to return to a comparison of this approach with that of
strong stability preserving methods.  
The essential differences are the following.
\begin{itemize}
    \item In the current approach, we consider a more restricted class of
        problems; however, this class contains the principal class of
        problems for which SSP theory was developed.
    \item In the current approach, it is not necessary that the forward
        Euler method be positive in order to prove positivity for higher-order
        methods.  Nevertheless, in all cases studied so far, we have obtained
        positive step sizes only for semi-discretizations that are stable
        under forward Euler integration.
    \item The current approach gives larger step sizes than the SSP
        approach for many methods.
\end{itemize}
Both approaches are still overly pessimistic for certain methods of interest, such
as the classical 4-stage 4th-order method, which cannot be guaranteed positive
with either approach but yields good results in practice.  It may be possible
to obtain sharper step-size restrictions for such methods by either restricting
the allowed initial data or invoking some assumption of consistency on values
of $q$ generated from one stage to the next (both approaches would lead to
consideration of positivity of the polynomials $P_i$ on sets other than hypercubes).

In the remainder of this section, we demonstrate the application of these techniques
 to some additional semi-discretizations.

\subsection{Centered discretizations of hyperbolic problems}\label{sec:advPDEcentered}
Let us consider centered semi-discretizations of the form
\begin{align} \label{eq:skewsym}
    u'(t) = Q(u(t),t) D u(t),
\end{align}
where $Q$ is diagonal and $D$ is skew-symmetric.  Such discretizations arise
when centered finite differences are applied to a hyperbolic problem.
For example,
\begin{align}\label{eq:centeredODEs}
    u'_k(t) & = q_k(u(t),t) \frac{u_{k-1}(t)-u_{k+1}(t)}{2\Delta x},\quad k=1,\ldots,N
\end{align}
with $q=1$ is a common semi-discretization of the advection equation 
$U_t+U_x=0$.  Since the exact solution of this system of ODEs does not
preserve positivity, any consistent semi-discretization will also not be
positivity-preserving for sufficiently small step sizes.  Below, we show
how this conclusion can be reached directly using the present technique.

For such discretizations, we have the following result, which follows from the
fact that odd powers of $D$ are skew-symmetric while even powers are symmetric.
\begin{lem}
    Let a semi-discretization \eqref{eq:skewsym} be given with $Q$ diagonal and
    $D$ skew symmetric.  Define the vector $\hat{\xi}$ such that 
    $\hat{\xi}_{k+i}^j := \xi_{k-i}^j$.  For any Runge--Kutta method applied to
    this semi-discretization, we have an iteration of the form
    \eqref{eq:polysym}, where
    \begin{align*}
        P_j(\xi) & = P_{-j}(\hat{\xi}) & \text{for $j$ even;} \\
        P_j(\xi) & =-P_{-j}(\hat{\xi}) & \text{for $j$ odd.}
    \end{align*}
\end{lem}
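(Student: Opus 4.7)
The plan is to exploit a global reflection symmetry. Let $R$ denote the spatial reflection around the grid index $k$, defined by $(Rv)_\ell := v_{2k - \ell}$, so $R^2 = I$. Writing the RK step as an operator identity $u^{n+1} = \Phi(Q^1,\dots,Q^m; D)\,u^n$, where $\Phi$ is the noncommutative polynomial determined by the Butcher tableau and $Q^i := \mathrm{diag}(\xi^i_\ell)$, the two key observations are
\begin{equation*}
R\,Q^i\,R \;=\; \hat Q^i \;:=\; \mathrm{diag}(\hat\xi^i_\ell), \qquad R\,D\,R \;=\; -D.
\end{equation*}
The first is immediate from the diagonal structure of $Q^i$ together with the definition $\hat\xi^i_{k+i'} := \xi^i_{k-i'}$, and the second holds because a translation-invariant skew-symmetric stencil is odd under reflection. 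Conjugating $\Phi$ letter-by-letter then yields the crucial operator identity
\begin{equation*}
R\,\Phi(Q^\cdot; D)\,R \;=\; \Phi(\hat Q^\cdot; -D).
\end{equation*}

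Next I would decompose $\Phi = \sum_{d\ge 0}\Phi^{(d)}$ by the total number $d$ of $D$-factors in each monomial. Each application of $D$ contributes a shift by an odd integer (the stencil of a centered skew-symmetric first-derivative operator is supported on odd shifts), so the $(k,k-j)$-entry of $\Phi^{(d)}$ is nonzero only when $d\equiv j \pmod 2$. Every monomial that contributes to $P_j(\xi)$ therefore has $d$-parity equal to that of $j$, and substituting $-D$ for $D$ multiplies each such monomial by $(-1)^d = (-1)^j$. Hence
\begin{equation*}
\bigl[\Phi(\hat Q^\cdot; -D)\bigr]_{k,\,k-j} \;=\; (-1)^j\,P_j(\hat\xi).
\end{equation*}

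On the other hand, from $R_{ab} = \delta_{a,\,2k-b}$ I compute $\bigl[R\,\Phi(Q^\cdot;D)\,R\bigr]_{k,\,k-j} = \Phi_{k,\,k+j}$, and this is by definition the coefficient of $u^n_{k+j}$ in $u^{n+1}_k$, namely $P_{-j}(\xi)$. Equating the two $(k,k-j)$-entries gives $P_{-j}(\xi) = (-1)^j\,P_j(\hat\xi)$. Since $(-1)^{-j} = (-1)^j$ and $\hat{\hat\xi} = \xi$, replacing $j$ by $-j$ yields $P_j(\xi) = (-1)^j P_{-j}(\hat\xi)$, which is precisely the two cases stated in the lemma.

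The main obstacle is the careful bookkeeping required to align the reflection point with the lemma's convention for $\hat\xi$, together with the parity congruence $d\equiv j \pmod 2$. The latter implicitly relies on the stencil of $D$ being supported on odd shifts; this is automatic for the nearest-neighbor example \eqref{eq:centeredODEs} and for the usual higher-order centered discretizations of first derivatives, and should be recorded as a mild strengthening of the hypothesis if wider skew-symmetric $D$ are to be allowed.
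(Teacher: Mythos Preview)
Your reflection argument is correct and is a careful fleshing-out of what the paper offers only as a one-line hint: the paper merely remarks that ``odd powers of $D$ are skew-symmetric while even powers are symmetric,'' without addressing how the intervening diagonal factors $Q^i$ are handled. Your conjugation identity $R\,\Phi(Q^\cdot;D)\,R=\Phi(\hat Q^\cdot;-D)$ is precisely the device that takes care of this, and your computation of the $(k,k-j)$ entry on both sides is correct.

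Your closing caveat is also well taken and in fact identifies a genuine imprecision in the lemma as stated. The congruence $d\equiv j\pmod 2$ really does require the stencil of $D$ to be supported on odd shifts; skew-symmetry and translation invariance alone give only $c_{-j}=-c_j$, which does not exclude even shifts. For instance, take the skew-symmetric circulant $D$ with $c_2=1$, $c_{-2}=-1$ and apply forward Euler: then $P_2(\xi)=\xi^1_k$ while $P_{-2}(\hat\xi)=-\hat\xi^1_k=-\xi^1_k$, contradicting the even-$j$ case. The paper's intended application (centered finite differences for $\partial_x$, as in \eqref{eq:centeredODEs}) does satisfy the odd-shift hypothesis, so your proposed strengthening is exactly the right thing to record.
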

The latter equality for the odd-numbered polynomials generally implies
that no positive step-size coefficient exists.


\subsection{The heat equation}\label{sec:heatPDE}

As a next example, we consider the scalar heat equation
$U_t=\kappa(x) U_{xx}$
and its semi-discretization in the form
\begin{align}\label{eq:heatODEs}
u'_k(t)=\displaystyle q_k\ \frac{u_{k-1}(t)-2u_k(t)+u_{k+1}(t)}{(\Delta x)^2},\quad k=1,\ldots,N,
\end{align}
where $u_0:=u_N,\ u_{N+1}:=u_1$, and $q_k = \kappa(x_k)\ge 0$.
The matrix $D_N$ in \eqref{eq:ERKstep-alt} is now the $N\times N$ tridiagonal matrix with entries $(1,-2,1)$ such that $[D_N]_{1,N}=[D_N]_{N,1}=1$. 

When, for example, a two-stage ERK method is applied to \eqref{eq:heatODEs}, the $k^\textrm{th}$ component of the step solution in \eqref{eq:ERKstep-alt} becomes
$$u^{n+1}_k=P_{-2}(\xi) u^{n}_{k+2}+P_{-1}(\xi) u^{n}_{k+1}+P_0(\xi) u^{n}_{k}+P_1(\xi) u^{n}_{k-1}+P_2(\xi) u^{n}_{k-2},$$
where
\begin{align} 
    P_{-2}(\xi)=\ & a_{21}b_2\xi_{k+1}^{1}\xi_k^{2}, \nonumber\\
    P_{-1}(\xi)=\ & b_1\xi_k^{1}+b_2\xi_k^{2}-2a_{21}b_2\xi_{k}^{1}\xi_k^{2}-2a_{21}b_2\xi_{k+1}^{1}\xi_k^{2}, \nonumber\\
    P_0(\xi)=\ & 1-2b_1\xi_k^{1}-2b_2\xi_k^{2}+a_{21}b_2\xi_{k-1}^{1}\xi_k^{2}+4a_{21}b_2\xi_{k}^{1}\xi_k^{2}+a_{21}b_2\xi_{k+1}^{1}\xi_k^{2}, \nonumber\\
    P_1(\xi)=\ & b_1\xi_k^{1}+b_2\xi_k^{2}-2a_{21}b_2\xi_{k-1}^{1}\xi_k^{2}-2a_{21}b_2\xi_{k}^{1}\xi_k^{2},\nonumber\\
    P_2(\xi)=\ & a_{21}b_2\xi_{k-1}^{1}\xi_k^{2}\nonumber
\end{align}
and
$$
    \xi_k^j = \frac{\Delta t}{(\Delta x)^2} q_k, \quad  j  = 1,\ldots,m.
$$
By introducing the variables 
\begin{align*}
x:=\xi_{k-1}^{1},\ y:=\xi_{k}^{1},\ z:=\xi_{k+1}^{1},\ u:=\xi_{k}^{2}
\end{align*}
and using \eqref{ButcherERK22}, the multivariable polynomials corresponding to the ERK(2,2) family are
now 
\begin{align*}
    P_{-2}(x,y,z,u)=\ & \frac{u z}{2},\\
    P_{-1}(x,y,z,u)=\ & \frac{u-y-2 \alpha  (y u+u z-y)}{2 \alpha},\\
    P_0(x,y,z,u)=\ & \frac{u (\alpha  x+4 \alpha  y+\alpha  z-2)+2 (\alpha -2 \alpha  y+y)}{2 \alpha },\\
    P_1(x,y,z,u)=\ & \frac{u-y-2 \alpha  (x u+y u-y)}{2 \alpha},\\
    P_2(x,y,z,u)=\ & \frac{x u}{2}.
\end{align*}
This time, unlike in Section \ref{sec:advPDEcentered}, we get non-trivial step-size coefficients
in the ERK(2,2) family.
\begin{prop} 
We have
 \[
    \gamma_\alpha
    \begin{cases}
    =1/2,& \alpha\in [1/2,1]\\
    <1/2, & 0\ne \alpha <1/2\textrm{ or } \alpha>1.
    \end{cases}
\]
\end{prop}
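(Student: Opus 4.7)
My plan rests on the observation that each of the five polynomials $P_{-2}, P_{-1}, P_0, P_1, P_2$ displayed just before the proposition is \emph{multilinear} in $(x, y, z, u)$ — affine in each variable separately. Consequently the minimum of any $P_i$ over a box $[0, \delta]^4$ is attained at one of the $2^4 = 16$ vertices, reducing the non-negativity test to a finite computation. I would also exploit the symmetry $P_{-1}(x, y, z, u) = P_1(z, y, x, u)$ evident from the formulas, which halves the remaining workload. Since $P_{\pm 2}$ are manifestly non-negative on the non-negative orthant, only $P_0$ and $P_1$ require genuine analysis.

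To prove $\gamma_\alpha \ge 1/2$ for $\alpha \in [1/2, 1]$, I would substitute all 16 combinations $x, y, z, u \in \{0, 1/2\}$ into $P_1$ and $P_0$. Each evaluation produces an expression of the form $c_0 + c_1 \alpha$ with rational $c_0, c_1$, whose non-negativity on $[1/2, 1]$ is immediate. A more compact alternative for $P_0$ is the regrouping
\[
2\alpha P_0 = 2\alpha - 2u + \alpha u(x + z) + 2y\bigl(1 - 2\alpha(1 - u)\bigr),
\]
bounded by splitting on the sign of the last bracket (the first three terms sum to at least $2\alpha - 2u + 0 \ge 0$, and in the unfavorable case one uses $y \le 1/2$ to absorb the negative contribution). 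For the matching upper bound, direct substitution gives
\[
P_1(\delta, \delta, \delta, \delta) = -\delta(2\delta - 1),
\]
which is strictly negative for every $\delta > 1/2$; thus $\gamma_\alpha \le 1/2$, and together $\gamma_\alpha = 1/2$ throughout $[1/2, 1]$.

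For the remaining cases I would simply exhibit a single negativity witness located in $[0, 1/2]^4$: for $\alpha > 1$, the vertex $(0, 0, 1/2, 1/2)$ gives $P_{-1} = (1 - \alpha)/(4\alpha) < 0$; for $0 < \alpha < 1/2$, the point $(0, \epsilon, 0, 0)$ gives $P_{-1} = \epsilon(2\alpha - 1)/(2\alpha) < 0$ for any $\epsilon > 0$; and for $\alpha < 0$, the point $(0, 0, 0, \epsilon)$ gives $P_1 = \epsilon/(2\alpha) < 0$. Any such negative value forces $\gamma_\alpha < 1/2$. The densest step in the whole argument is the vertex enumeration (or equivalently the sign-splitting) for $P_0$ when $\alpha \in [1/2, 1]$, since this is the only polynomial depending non-trivially on all four variables with mixed-sign coefficients; but it is routine bookkeeping rather than a genuine obstacle.
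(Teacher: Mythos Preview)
Your proposal is correct and follows essentially the paper's approach: both reduce to $P_{-1},P_0,P_1$, use vertex evaluations to obtain the necessary restrictions on $\alpha$ and on the admissible step size, and then verify sufficiency on $[0,1/2]^4$ for $\alpha\in[1/2,1]$.

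The one genuine improvement in your write-up over the paper's version is that you make the \emph{multilinearity} of the $P_i$ explicit. The paper uses vertex evaluation only to derive necessary conditions (as in its Appendix~B) and then asserts the sufficiency step (``we can then prove\dots'') without details, presumably by direct inequality manipulation as in its Proposition~5. Your observation that each $P_i$ is affine in every variable separately means that non-negativity at the $16$ vertices is already equivalent to non-negativity on the whole box, so the same finite check handles both directions at once. This is a cleaner argument than what the paper sketches. One small correction: the vertex values are of the form $(c_0+c_1\alpha)/(2\alpha)$ rather than $c_0+c_1\alpha$, but since $2\alpha>0$ on the parameter range in question this does not affect the sign analysis.
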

\begin{proof}
For any fixed $\alpha\ne 0$, we are looking for some $\varepsilon\equiv
\varepsilon(\alpha)>0$ such that the inequalities $P_i\ge 0$  for $-2\le i \le 2$
hold in $[0,\varepsilon]^4$.  Clearly, it is enough to consider the indices
$i=-1,0,1$. By again investigating the
vertices of the hypercube $[0,\varepsilon]^4$ (see Appendix \ref{appendixB}), we derive the necessary conditions
\[
\left( \frac{1}{2}\leq \alpha \leq 1 \text{ and } 0<\varepsilon \leq \frac{1}{2}\right)
\text{ or }
\left(\alpha>1 \text{ and } 0<\varepsilon \leq \frac{1}{2\alpha}\right).
\]
By considering the maximum value $\varepsilon=1/2$, we can then prove that 
for any $\alpha\in[1/2,1]$ and any $(x,y,z,u)\in [0,1/2]^4$ we indeed have 
$P_{-1}\ge 0$, $P_{0}\ge 0$ and $P_{1}\ge 0$.
\end{proof}


\appendix

\section{Some \textit{Mathematica} code}\label{sec:appendix}

\subsection{Generating the multivariable polynomials}\label{appendixA}

The first cell below contains the definition of a \textit{Mathematica} function {\texttt{ERKpolynomials}} for generating the multivariable polynomials in \eqref{eq:poly}. The two arguments $A$ and $b$ correspond to the Butcher tableau of the ERK method, and the output is a list of the $m+1$ polynomials $P_0$, \ldots, $P_m$ in the variables $\xi_\ell^j$. Note that 
the superscripts in $\xi_\ell^j$ are not exponents; the symbols $\xi_\ell^j$ with different sub- or superscripts denote different variables.

The second cell illustrates how to  
obtain the $4+1=5$ polynomials in $\frac{4\cdot (4+1)}{2}=10$ variables corresponding to the classical ERK(4,4) method.

\noindent \rule{\textwidth}{.4pt}
\begin{verbatim}
ERKpolynomials[A_,b_]:= Module[{m=Length@b,xi,S},ClearAll[\[Xi],k]; 
   xi=DiagonalMatrix@Table[Subsuperscript[\[Xi],k,i],{i,m}]; 
   CoefficientList[1+First[
   b.xi.Total@NestList[A.(DiscreteShift[S xi.#,{k,-1}]-xi.#)&,IdentityMatrix[m],m-1].
   ConstantArray[S-1,{m,1}]],S]]  
\end{verbatim}  
\rule{\textwidth}{.4pt}
\begin{verbatim}
ERKpolynomials[{{0,0,0,0},{1/2,0,0,0},{0,1/2,0,0},{0,0,1,0}},{1/6,1/3,1/3,1/6}] 
\end{verbatim}  
\rule{\textwidth}{.4pt}

\subsection{Non-negativity of polynomials at the vertices of a hypercube}\label{appendixB}

Here we provide a simple \textit{Mathematica} code to test the non-negativity of a multivariable polynomial
    by evaluating it at each vertex of a hypercube. In this particular example, the polynomial $P_{-1}(x,y,z,u)$
from Section \ref{sec:heatPDE} is evaluated at the $2^4$ vertices of the hypercube $[0,\varepsilon]^4$ with some $\varepsilon>0$, and the resulting system of 16 inequalities $P_{-1}\ge 0$ is solved.

\noindent \rule{\textwidth}{.4pt}
\quad $\texttt{Reduce\Big[}\varepsilon>0\texttt{\,\&\&\,And\,@@}\Big({\displaystyle 0\leq \frac{u-y-2 \alpha  (y\ u+u\  z-y)}{2 \alpha}}\texttt{/.}$\\ 
\\
\indent \quad \quad $\texttt{Thread\,/@\,Table\big[}{\displaystyle \{x,y,z,u\}\to}\texttt{\,Tuples[}\{0,\varepsilon\}\texttt{,4][[k]],}\{\texttt{k},1,2^4\}\texttt{\big]}\Big)\texttt{\Big]}$\\
\rule{\textwidth}{.4pt}
\bigskip
\bigskip
\noindent\textbf{Acknowledgement.} {We are indebted to the referees of the manuscript for their suggestions
that helped us improving the presentation of the material.}

\bibliographystyle{plain}
\bibliography{positivity.bib}

\end{document}